\def\NAT@def@citea{\def\@citea{\NAT@separator}}
\renewcommand*\env@matrix[1][*\c@MaxMatrixCols c]{%
  \hskip -\arraycolsep
  \let\@ifnextchar\new@ifnextchar
  \array{#1}}
\theoremstyle{plain} 
\newtheorem{theorem}{Theorem}[section]
\newtheorem{corollary}[theorem]{Corollary}
\newtheorem{lem}[theorem]{Lemma}
\theoremstyle{definition} 
\newtheorem{definition}[theorem]{Definition}
\newtheorem{remark}[theorem]{Remark}
\newtheorem{assumption}[theorem]{Assumption}
\newtheorem*{assumption*}{Assumption}
\colorlet{mycyan}{cyan!20}
\colorlet{myorange}{orange!40}
\colorlet{myyellow}{yellow!40}
\newcommand{\dstarinvodd}{d^{(2n+1)^{*-1} } }
\newcommand{\dstarinveven}{d^{(2n)^{*-1} } }
\newcommand{\define}[1]{\emph{#1}}
\newcommand{\R}{\mathbb R}
\newcommand{\C}{\mathbb C}
\newcommand{\N}{\mathbb N}
\def\widebreve{\mathpalette\wide@breve}
\def\wide@breve#1#2{\sbox\z@{$#1#2$}%
     \mathop{\vbox{\m@th\ialign{##\crcr
\kern0.08em\brevefill#1{0.8\wd\z@}\crcr\noalign{\nointerlineskip}%
                    $\hss#1#2\hss$\crcr}}}\limits}
\def\brevefill#1#2{$\m@th\sbox\tw@{$#1($}%
  \hss\resizebox{#2}{\wd\tw@}{\rotatebox[origin=c]{90}{\upshape(}}\hss$}
\begin{document}

\title{Explicit relation between two resolvent matrices of the truncated Hausdorff
matrix moment problem}

\author{
\name{Abdon E. Choque-Rivero\textsuperscript{a}\thanks{
CONTACT A.~E.  Choque-Rivero.  Email: abdon@ifm.umich.mx} 
and Monika Winklmeier\textsuperscript{b}
\thanks{CONTACT M. Winklmeier. Email: mwinklme@uniandes.edu.co}}
\affil{\textsuperscript{a} Universidad Michoacana de San Nicol\'as de Hidalgo, 
Instituto de F\'isica y Matem\'aticas,
Morelia, Mich., M\'exico;
\textsuperscript{b}~Universidad de los Andes, Departamento de Matem\'aticas, Bogot\'a, Colombia.
}
}

\maketitle

\begin{abstract}
   We consider the explicit relation between two resolvent matrices related to the truncated Hausdorff matrix moment problem (THMM) in the case of an even and odd number of moments.
   This relation is described with the help of four families of orthogonal matrix polynomials on the finite interval $[a,b]$ and their associated second kind polynomials.
\end{abstract}

\begin{keywords}
Hausdorff matrix moment problem, resolvent matrix, orthogonal matrix polynomials.
\end{keywords}
\begin{amscode}30E05, 42C05,  47A56.  \end{amscode}

\section{Introduction}

In the classical scalar Hamburger moment problem, a real sequence $(s_j)_{j=0}^\infty$ is given and the problem is to find a measure $\sigma$ such that 
\begin{align*}
   s_j = \int_\R t^j \sigma(dt),
   \qquad j\in \N_0.
\end{align*}
The problem is called \define{determinate} if there is exactly one solution $\sigma$; it is called \define{indeterminate} if there is more than one solution.
In the indeterminate case, all solutions can be described by a $2\times 2$ Nevanlinna matrix \cite[Definition 2.4.3]{akh} whose elements are transcendental functions, see \cite[Sec. 3.4, p. 113]{akh}.

In this paper we are concerned with the \define{truncated Hausdorff matrix moment} (THMM) problem.
Let $a < b$ be real numbers, $q, m \in\N$ and let $(s_j)_{j=0}^m$ be hermitian $q\times q$ matrices.
The truncated Hausdorff matrix moment on $[a,b]$ problem consists in finding all positive matrix measures
$\sigma$ whose $j$th moment is equal to $s_j$, that is, 
\begin{align*}
   s_j = \int_a^b t^j \sigma(dt),
   \qquad j=0,\dots, m.
\end{align*}
We denote the set of all solutions $\sigma$ of the THMM by 
\begin{align}
   \label{eq:solutions}
   {\mathcal M}_\geq^q[[a,b],\mathfrak B\cap[a,b];\ (s_j)_{j=0}^m].
\end{align}
The Stieltjes transform
\begin{align*}
   s(z) = \int_a^b \frac{1}{t-z}\, \sigma(dt)
\end{align*}
shows that the set \eqref{eq:solutions} is equivalent to the so-called \define{associated solution set}
\begin{align}
   \label{eq:asssolutions}
   {\mathfrak S}_\geq^q[[a,b],\mathfrak B\cap[a,b];\ (s_j)_{j=0}^{m}]
   :=\left\{
   s(z)=\int_{[a,b]}\frac{d\sigma(t)}{t-z},\,
   \sigma\in {\mathcal M}_\geq^q[[a,b],\mathfrak B\cap[a,b];
   (s_j)_{j=0}^{m}]
   \right\}.
\end{align}
As in the scalar case, it can be shown that the set \eqref{eq:asssolutions} is given by all functions $s$ of the form
\begin{equation}
   s(z)=
   \left[\alpha^{(m)}(z)\,{\bf p}(z)+\beta^{(m)}(z)\,{\bf q}(z)\right]
   \left[\gamma^{(m)}(z)\,{\bf p}(z)+\delta^{(m)}(z)\,{\bf q}(z)\right]^{-1}, %
   \label{0002a}
\end{equation}
where  ${\bf p}$ and ${\bf q}$ are $q\times  q$ matrix-valued functions of $z$ which are meromorphic in $\C\setminus [a,b]$ and satisfy certain positivity conditions, see for instance \cite{D-C,abdon1, abdon2}.
The coefficient matrix 
\begin{equation}
   \label{0003a}
   \begin{pmatrix}
      \alpha^{(m)}(z) & \beta^{(m)}(z) \\
      \gamma^{(m)}(z) & \delta^{(m)}(z)
   \end{pmatrix}
\end{equation}
is called a \define{resolvent matrix} of the THMM problem.
Note that the matrix $U^{(m)}$ is not unique.
Indeed, the first resolvent matrix was found in 2001 in \cite[Equality (10) and Equality (30)]{D-C}
under the assumption that certain matrices constructed from the given moments are strictly positive.
We will call it the \define{resolvent matrix with respect to the point $0$}.
Later, in 2006 and 2007, other resolvent matrices were found in \cite[Equality (6.20)]{abdon1} 
for even $m$ and in \cite[Equalities (6.54) through (6.57)]{abdon2} for odd $m$.
We will call these matrices the \define{resolvent matrices with respect to the point $a$},
see \eqref{RM2nm1} and \eqref{RM1}.
The papers \cite{abdon1} (2006), Equality (6.20) and \cite{abdon2}
(2007), Equalities (6.54) through (6.57)
contain necessary and sufficient conditions for the THMM problem to be indeterminate.
In these three works the entries of the resolvent matrices are polynomials in $z$.
In 2015,
the resolvent matrices from \cite{abdon1, abdon2}
were expressed in terms of four families of orthogonal polynomials and their associated second kind polynomials in \cite[Equalities (3.24)-odd, (3.37)-even]{abKN}.

We remark that the scalar version of the Hausdorff moment problem was studied by Krein and Nudel'man in their book \cite[Page 115]{KreinNudelman}.

\subsection*{Matrices of moments}

Let $(s_j)_{j=0}^m$ be a sequence of hermitian $q\times q$ matrices and let $\sigma\in \mathcal M_\geq^q[[a,b],\mathfrak B\cap[a,b];\ (s_j)_{j=0}^m]$ be a solution of the THMM problem.
We will need the following perturbed measures defined on Borel sets $B$ by
\begin{align*}
   \sigma_{2}(B):=&\int_{B}(b-t)(t-a)\, d\sigma(t), \\
   \sigma_3(B):=&\int_{B}(b-t)\, d\sigma(t),\\
   \sigma_4(B):=&\int_{B}(t-a)\, d\sigma(t)
\end{align*}
and their corresponding sequence of moments $(s^{(r)}_j)_{j=0}^m$, $r=1,2,3,4$.
Clearly they satisfy
\begin{alignat*}{3}
   s_j^{(1)}   & = s_j, \\
   s_{j}^{(2)} & = -abs_{j}+(a+b)s_{j+1}-s_{j+2},
   \\
   s_j^{(3)} & = bs_{j}-s_{j+1}, 
   \\
   s_j^{(4)} & = -as_{j}+s_{j+1}. 
\end{alignat*}
Note that $s_j^{(2)} = b s_j^{(4)} - s_{j+1}^{(4)} = -a s_j^{(3)} + s_{j+1}^{(3)}$.

For $r=1,2,3,4$, we define the block Hankel matrices $H_{r,j}$ by
\begin{align}
   H_{r,j}:=
   (s^{(r)}_{k+\ell})_{\ell,k=0}^j =
   &
   \begin{tikzpicture}[baseline=(current bounding box.center),
      every left delimiter/.style={xshift=.5em},
      every right delimiter/.style={xshift=-.5em},
      ampersand replacement=\&,
      ]
      \matrix (m) [matrix of math nodes,nodes in empty cells, right delimiter={)},left delimiter={(} ]{
      s_0^{(r)} \& s_1^{(r)}\vphantom{s_{j}^{(r)}} \&  \hspace*{4ex} \& s_{j}^{(r)}  \\[-1ex]
      s_1^{(r)} \& s_2^{(r)}\vphantom{s_{j}^{(r)}} \& \& s_{j+1}^{(r)}  \\
      \&       \&  \\[1ex] 
      s_j^{(r)} \& s_{j+1}^{(r)} \& \& s_{2j}^{(r)}  \\
      } ;
%
      \draw[thick, dotted] ($(m-2-4.south west)+(0ex,0ex)$)-- ($(m-4-2.north east)+(0ex,-0ex)$);
      \draw[thick, dotted] ($(m-1-2.east)+(0ex,0ex)$)-- ($(m-1-4.west)+(0ex,-0ex)$);
      \draw[thick, dotted] ($(m-2-2.east)+(0ex,0ex)$)-- ($(m-2-4.west)+(0ex,-0ex)$);
      \draw[thick, dotted] ($(m-4-2.east)+(0ex,0ex)$)-- ($(m-4-4.west)+(0ex,-0ex)$);
      \draw[thick, dotted] ($(m-2-1.south)+(0ex,0ex)$)-- ($(m-4-1.north)+(0ex,-0ex)$);
      \draw[thick, dotted] ($(m-2-2.south)+(0ex,0ex)$)-- ($(m-4-2.north)+(0ex,-0ex)$);
      \draw[thick, dotted] ($(m-2-4.south)+(0ex,0ex)$)-- ($(m-4-4.north)+(0ex,-0ex)$);
   \end{tikzpicture}.
   \label{201A}
\end{align}
In the case of an odd number of moments, i.e., if $m=2n$, the THMM has a solution if the block matrices $H_{1,n}$ and $H_{2,n-1}$ are both nonnegative. See \cite[Theorem 1.3]{abdon2}.
In the case of an even number of moments, i.e., if $m=2n+1$, the THMM has a solution if the block matrices $H_{3,n}$ and $H_{4,n}$ are both nonnegative.
See \cite[Theorem 1.3]{abdon1}.
\medskip

We will also need the following matrices.
\begin{equation}\label{eqHt1n}
   \widetilde H_{r,j}:=(s_{k+\ell+1}^{(r)})_{\ell,k=0}^j
  =
   \begin{tikzpicture}[baseline=(current bounding box.center),
      every left delimiter/.style={xshift=.5em},
      every right delimiter/.style={xshift=-.5em},
      ampersand replacement=\&,
      ]
      \matrix (m) [matrix of math nodes,nodes in empty cells, right delimiter={)},left delimiter={(} ]{
      s_1^{(r)} \& s_2^{(r)} \vphantom{s_{j+1}^{(r)}}\&  \hspace*{4ex} \& s_{j+1}^{(r)}  \\[-1ex]
      s_2^{(r)} \& s_3^{(r)} \vphantom{s_{j+1}^{(r)}}\& \& s_{j+2}^{(r)}  \\
      \&       \&  \\[1ex] 
      s_{j+1}^{(r)} \& s_{j+2}^{(r)} \& \& s_{2j+1}^{(r)}  \\
      } ;
%
      \draw[thick, dotted] ($(m-2-4.south west)+(0ex,0ex)$)-- ($(m-4-2.north east)+(0ex,-0ex)$);
      \draw[thick, dotted] ($(m-1-2.east)+(0ex,0ex)$)-- ($(m-1-4.west)+(0ex,-0ex)$);
      \draw[thick, dotted] ($(m-2-2.east)+(0ex,0ex)$)-- ($(m-2-4.west)+(0ex,-0ex)$);
      \draw[thick, dotted] ($(m-4-2.east)+(0ex,0ex)$)-- ($(m-4-4.west)+(0ex,-0ex)$);
      \draw[thick, dotted] ($(m-2-1.south)+(0ex,0ex)$)-- ($(m-4-1.north)+(0ex,-0ex)$);
      \draw[thick, dotted] ($(m-2-2.south)+(0ex,0ex)$)-- ($(m-4-2.north)+(0ex,-0ex)$);
      \draw[thick, dotted] ($(m-2-4.south)+(0ex,0ex)$)-- ($(m-4-4.north)+(0ex,-0ex)$);
   \end{tikzpicture}
\end{equation}
obtained from $H_{r,j+1}$ by deleting the first row and the last column (or alternatively, by deleting the first column and the last row).

\begin{definition} \label{deboth}  Let $[a,b]$ be a finite interval on the real axis
   $\R$.
   The sequence of $q\times q$ hermitian matrices $(s_k)_{k=0}^{2n}$ (resp. $(s_k)_{k=0}^{2n+1}$)
   is called a \define{Hausdorff positive definite sequence on $[a,b]$}
   if the block Hankel matrices $H_{1,n}$ and $H_{2,n-1}$
   (resp. $H_{3,n}$ and $H_{4,n}$)  are both positive
   definite matrices.
   If the sequence $(s_k)_{k=0}^{m}$ 
   is a positive definite
   sequence, then the THMM problem is called \define{non degenerate}.
\end{definition}

In the present work, we deal with the non degenerate case.
In this case, it can be shown that 
a holomorphic function $s$ defined on the upper half plane is an associated solution of the THMM if and only if certain block operator matrices involving $s$ and the given moments are nonnegative, 
see \cite{D-C} and \cite{abdon1, abdon2}.
Factorization of these matrices shows that this is the case if and only if $s$ is of the form~\eqref{0002a}.
From \eqref{0002a}, we clearly see that we obtain the set all of solutions of the THMM problem if and only if we know the so-called resolvent matrix~\eqref{0003a}
together with the allowed 
pairs ${\bf p}$ and ${\bf q}$.
See \cite[Definition 5.2]{abdon1} and \cite[Definition 5.2]{abdon2}. 
Different representations of the resolvent matrix are available.

\subsection*{Resolvent matrix with respect to the point $0$}
The techniques employed in \cite{D-C} and in \cite{abdon1, abdon2} show that the resolvent matrix can be written as \eqref{eqn:VV14m1} for an even number of moments and as
\eqref{eqn:VV14} for an odd number of moments
of the resolvent matrix.
We call them the \define{resolvent matrices with respect to the point $0$} and we will denote them by $V^{(m)}$.
Its entries $\alpha^{(m)}(z)$, $\beta^{(m)}(z)$, $\gamma^{(m)}(z)$, and $\delta^{(m)}(z)$
are matrix valued polynomial functions of $z$ which are uniquely constructed from the input set of moments $(s_j)_{j=0}^m$.

Its expansion in powers of $z$ is
\begin{align}
   \label{v2nm1A}
   V^{(2n+1)} &= \widetilde A_0+ z\widetilde A_1+\ldots+
   z^{n+1}\widetilde A_{n+1}+z^{n+2}\widetilde A_{n+2},
   &&\text{if }\ m=2n+1,
   \\
   \label{v2nAA}
   V^{(2n)} &= \widetilde B_0+ z\widetilde B_1+\ldots+
   z^{n}\widetilde B_{n}+z^{n+1}\widetilde B_{n+1},
   &&\text{if }\ m=2n.
\end{align}
The coefficients $\widetilde A_j$ and $\widetilde B_j$ are rational functions in
$a$ and $b$, see Remarks~\ref{remsep1} and \ref{remsep2} for explicit formulas.

\subsection*{Resolvent matrix with respect to the point $a$}
In \cite{abKN} a different approach was used to describe the resolvent matrix. 
The first author used the resolvent matrix from \cite{abpol} and expressed it in terms of orthogonal polynomials of the first and the second kind.
This leads to the resolvent matrix 
\eqref{RM2nm1} in the case of an even number of moments and to 
\eqref{RM1} in the case of an odd number of moments denoted by $U^{(m)}$.
We call them the \define{resolvent matrices with respect to the point $a$}.
\medskip

For $m=2n+1$ the resolvent matrix 
$U^{(2n+1)}$ can be written as
\begin{equation}\label{v2nm1C}
U^{(2n+1)}(z)=\widetilde C_0+ (z-a)\widetilde C_1+\ldots+
(z-a)^{n+1}\widetilde C_{n+1}+(z-a)^{n+2}\widetilde C_{n+2}.
\end{equation}
The coefficients $\widetilde C_j$ for $0\leq j\leq n+2$ are rational functions in $a$ and $b$;
see Remark~\ref{remsep3} for explicit formulas.

For $m=2n$ the resolvent matrix 
$U^{(2n)}$ can be expressed in the following form
\begin{equation}\label{v2nD}
U^{(2n)}(z)=\widetilde D_0+ (z-a)\widetilde D_1+\ldots+
(z-a)^{n+1}\widetilde D_{n+1}.
\end{equation}
The coefficients $\widetilde D_j$ for $0\leq j\leq n+1$ are rational functions in $a$ and $b$;
see Remark~\ref{remsep4} for explicit formulas.
\medskip

The particular representation of the resolvent matrix of the THMM problem plays a crucial role in its factorization.
Different factorizations of the mentioned resolvent matrices lead to four families of orthogonal matrix polynomials, the Dyukarev-Stieltjes parameters, continued fractions and the coefficients of the three term recurrence relation. 
Consequently, the explicit relation between the resolvent matrices given in
\cite{D-C} and \cite{abKN} 
is relevant.
\smallskip

To the best of the knowledge of the authors, the matrix moment problem was first studied in \cite{k1949} and \cite{k1949a}. In \cite{simon}, \cite{dette2}, \cite{duranker}, \cite{duran2}, \cite{duran1}, \cite{duran3}, \cite{gero}  orthogonal matrix polynomials were considered  to solve matrix moment problems or structural formulas and differential relations.
 In \cite{DFK}, \cite{dyu0}, \cite{D-C2}, \cite{D-C3} V.P.~Potapov's method of
 matrix inequalities was used to solve interpolation problems in certain
 classes of functions.
 In \cite{fkm1}, the THMM problem was recently studied via a
Schur-Nevanlinna type algorithm.  The operator approach was applied to solve the THMM problem in \cite{zagor} and \cite{aboperator}.

\subsection*{Main result of this work.}
In this work we give an explicit relation between the two different types of resolvent matrices in the form
\begin{equation*}
   U^{(m)}(z) = C V^{(m)}(z) D.
\end{equation*}
The importance of the indicated explicit relation is twofold: 
Firstly, it allows us to find new relations among the matrix orthogonal polynomials, their second kind polynomials and block elements of the resolvent matrices appearing in  \cite{D-C}, \cite{abKN}.
For instance, the identities in Corollary~\ref{rem00BB} are new.
Secondly, with such an explicit relation important new expressions for the Dyukarev-Stieltjes parameters, continued fractions can be obtained.
From the point of view of applications, the found relation between two resolvent matrices would allow to rewrite and develop the representation of the set of admissible controls for bounded control systems; 
see \cite{ieeeA1}, \cite{optimo}, \cite{cks2010}.

In our work we have to distinguish the cases when $m$ is even or odd. 
We will present various explicit formulas and identities which exhibit interactions between the matrices corresponding to odd and even values of $m$; 
see Section~\ref{sec0003}.

\section{Matrices of moments and orthogonal matrix polynomials} \label{sec0002}
In this section, we reproduce some notation from \cite{abcoef} that appear throughout this work.
In particular, we recall the definition of the orthogonal matrix polynomials (OMP)
$P_{k,j}$ on $[a,b]$ and their second kind polynomials $Q_{k,j}$.

\subsection{Shifts and truncations} 
We start with some auxiliary matrices which do not depend on the moments.
Let $q\in\N$ and $\mathbb M_q(k\times \ell)$ be the set of all $k\times \ell$ block matrices whose entries are $q\times q$ matrices.
We define the ``block down shift'' on the vector space $\mathbb C_q^{j+1}$
\begin{align}\label{eqTn}
   T_{0}:=0,\quad
   T_{j}:= 
   \tikzset{baseline=(current bounding box.center),
   every left delimiter/.style={xshift=.5em},
   every right delimiter/.style={xshift=-.5em},
   column sep=.5ex,
   row sep=-.5ex,
   }
   \begin{tikzpicture}[baseline=(current bounding box.center),
      every left delimiter/.style={xshift=.5em},
      every right delimiter/.style={xshift=-.5em},
      ampersand replacement=\&,
      ]
      \matrix (m) [matrix of math nodes,nodes in empty cells, right delimiter={)},left delimiter={(} ]{
      0 \&       \&   \&  0\\ 
      I \&       \&   \&   \\ 
      \& \rule{1ex}{0ex}  \&   \&   \\[2ex]
      0 \&       \& I \&  0 \\
      } ;
%
      \draw[thick, dotted] ($(m-1-1.south east)+(0ex,0ex)$)-- ($(m-4-4.north west)+(0ex,-0ex)$);
      \draw[thick, dotted] ($(m-2-1.south east)+(0ex,0ex)$)-- ($(m-4-3.north west)+(0ex,-0ex)$);
   \end{tikzpicture}
   \in \mathbb M_q((j+1)\times (j+1))
   ,\qquad j\geq 1.
\end{align}
Note that all entries in $T_j$ are $q\times q$ matrices and all other entries are $0$.

Clearly, every $T_j$ is nilpotent, and therefore the matrix valued function
\begin{align}
   R_{j}:\mathbb{C}\to \mathbb M_q((j+1)\times (j+1)),\quad
   \label{52}R_{j}(z)&:=(I_{(j+1)q}-zT_{j})^{-1},\quad j\geq 0,
\end{align}
is well-defined and satisfies
$R_j(z) = \sum_{\ell=0}^j z^\ell T^\ell$.
\smallskip

We will also make use of the following $(j+1)\times j$ block matrices
\begin{align}
   L_{1,j}:=
   \begin{pmatrix}
      0  \\
      I_{jq\times jq}
   \end{pmatrix}
   =
   \begin{tikzpicture}[baseline=(current bounding box.center),
      every left delimiter/.style={xshift=.5em},
      every right delimiter/.style={xshift=-.5em},
      ampersand replacement=\&,
      ]
      \matrix (m) [matrix of math nodes,nodes in empty cells, right delimiter={)},left delimiter={(} ]{
      0 \&       \& 0  \\ 
      I \&  \hspace*{2ex} \& 0  \\ [3ex]
      0 \&       \& I  \\
      } ;
%
      \draw[thick, dotted] ($(m-2-1.south east)+(0ex,0ex)$)-- ($(m-3-3.north west)+(0ex,-0ex)$);
      \draw[thick, dotted] ($(m-1-1.east)+(0ex,0ex)$)-- ($(m-1-3.west)+(0ex,-0ex)$);
      \draw[thick, dotted] ($(m-2-1.east)+(0ex,0ex)$)-- ($(m-2-3.west)+(0ex,-0ex)$);
      \draw[thick, dotted] ($(m-3-1.east)+(0ex,0ex)$)-- ($(m-3-3.west)+(0ex,-0ex)$);
      \draw[thick, dotted] ($(m-2-1.south)+(0ex,0ex)$)-- ($(m-3-1.north)+(0ex,-0ex)$);
      \draw[thick, dotted] ($(m-2-3.south)+(0ex,0ex)$)-- ($(m-3-3.north)+(0ex,-0ex)$);
   \end{tikzpicture},
   \qquad
   L_{2,j}:=
   \begin{pmatrix}
      I_{jq\times jq} \\
      0
   \end{pmatrix}
   =
   \begin{tikzpicture}[baseline=(current bounding box.center),
      every left delimiter/.style={xshift=.5em},
      every right delimiter/.style={xshift=-.5em},
      ampersand replacement=\&,
      ]
      \matrix (m) [matrix of math nodes,nodes in empty cells, right delimiter={)},left delimiter={(} ]{
      I \&  \hspace*{2ex} \& 0  \\ [3ex]
      0 \&       \& I  \\
      0 \&       \& 0  \\ 
      } ;
%
      \draw[thick, dotted] ($(m-1-1.south east)+(0ex,0ex)$)-- ($(m-2-3.north west)+(0ex,-0ex)$);
      \draw[thick, dotted] ($(m-1-1.east)+(0ex,0ex)$)-- ($(m-1-3.west)+(0ex,-0ex)$);
      \draw[thick, dotted] ($(m-2-1.east)+(0ex,0ex)$)-- ($(m-2-3.west)+(0ex,-0ex)$);
      \draw[thick, dotted] ($(m-3-1.east)+(0ex,0ex)$)-- ($(m-3-3.west)+(0ex,-0ex)$);
      \draw[thick, dotted] ($(m-1-1.south)+(0ex,0ex)$)-- ($(m-2-1.north)+(0ex,-0ex)$);
      \draw[thick, dotted] ($(m-1-3.south)+(0ex,0ex)$)-- ($(m-2-3.north)+(0ex,-0ex)$);
   \end{tikzpicture}
   \in \mathbb M_q( (j+1)\times j)
   \label{82g}
\end{align}
and the block vectors
\begin{equation}
   v_{0}:=I,\quad
   v_{j}:=
   L_{2,j-1} v_{j-1}
   =
   \begin{pmatrix}
      I \\
      0_{jq\times q}
   \end{pmatrix},
   \quad
   j\geq1. \label{59}
\end{equation}
Clearly, the following relations hold.
\begin{alignat*}{7}
   L_{1,j} \begin{pmatrix} a_{1} \\ \vdots \\ a_{j}
   \end{pmatrix}
   &=
   \begin{pmatrix} 0  \\ a_{1} \\ \vdots \\ a_{j} \end{pmatrix},
   &
   \qquad
   & L_{2,j} \begin{pmatrix} a_{1} \\ \vdots \\ a_{j}
   \end{pmatrix}
   =
   \begin{pmatrix} a_{1} \\ \vdots  \\ a_{j} \\ 0
   \end{pmatrix},
   &
   \qquad
   & T_{j} \begin{pmatrix} a_{0} \\ a_{1} \\ \vdots \\ a_{j}
   \end{pmatrix}
   =
   \begin{pmatrix} 0 \\ a_{0} \\ \vdots  \\ a_{j-1}
   \end{pmatrix},
   \\[1ex]
   L_{1,j}^* \begin{pmatrix} a_0 \\ a_{1} \\ \vdots \\ a_{j}
   \end{pmatrix}
   &=
   \begin{pmatrix} a_{1} \\ \vdots \\ a_{j}
   \end{pmatrix},
   && L_{2,j}^* \begin{pmatrix} a_0 \\ \vdots \\ a_{j-1}  \\ a_{j}
   \end{pmatrix}
   =
   \begin{pmatrix} a_{0} \\ \vdots  \\ a_{j-1}
   \end{pmatrix},
   && T_{j}^* \begin{pmatrix} a_{0} \\ a_{1} \\ \vdots \\ a_{j}
   \end{pmatrix}
   =
   \begin{pmatrix} a_{1} \\ \vdots  \\ a_{j} \\ 0
   \end{pmatrix}
\end{alignat*}
where the $a_\ell$ can be $q\times q$ matrices or row vectors consisting of $q\times q$ matrices.
By transposition, we obtain
\begin{alignat*}{7}
   (a_0,\, \dots,\, a_{j}) L_{1,j} &= (a_1,\, \dots,\, a_{j}),
   &
   \qquad
   (a_0,\, \dots,\, a_{j-1}) L_{1,j}^* &= (0,\, a_0,\, \dots,\, a_{j-1}),
   \\
   (a_0,\, \dots,\, a_{j}) L_{2,j} &= (a_0,\, \dots,\, a_{j-1}),
   &
   (a_0,\, \dots,\, a_{j-1}) L_{2,j}^* &= (a_0,\, \dots,\, a_{j-1},\, 0),
   \\
   (a_0,\, \dots,\, a_{j}) T_{j} &= (a_1,\, \dots,\, a_{j},\, 0),
   &
   (a_0,\, \dots,\, a_{j}) T_{j}^* &= (0,\, a_0,\, \dots,\, a_{j-1}).
\end{alignat*}
It follows that 
\begin{gather}
   \label{eq:LLId}
   L_{1,j}^*L_{1,j} = L_{2,j}^*L_{2,j} = I,\\
   \label{eq:LLT}
   L_{1,j}L_{2,j}^* = T_j,\quad L_{2,j}^*L_{1,j} = T_{j-1}.
\end{gather}
If we multiply the first equality in \eqref{eq:LLT} by $L_{1,j}^*$ from the left and by $L_{2,j}$ from the right, we obtain with \eqref{eq:LLId}
\begin{gather}
   \label{eq:LTL1}
   L_{1,j}^*T_j = L_{2,j}^*, \quad
   T_{j}L_{2,j} = L_{1,j},\quad
   L_{1,j}^*T_{j}L_{2,j} = I.
\end{gather}
If we multiply the first equality in \eqref{eq:LTL1} by $L_{1,j}$ from the right and the second equality by $L_{2,j}^*$ from the left, we obtain with \eqref{eq:LLT}
\begin{gather}
   \label{eq:LTT}
   L_{1,j}^*T_{j} L_{1,j} = T_{j-1}, \quad
   L_{2,j}^*T_{j} L_{2,j} = T_{j-1}.
\end{gather}
If we multiply the first equality in \eqref{eq:LLT} by $L_{1,j}$ from the right and the second equality from the left by $L_{1,j}$, we obtain
\begin{align}
   \label{eq:LTLT}
   T_{j}L_{1,j} = L_{1,j}L_{2,j}^*L_{1,j} = L_{1,j}T_{j-1}.
\end{align}
Similarly, using the transposed equations in \eqref{eq:LLT} we obtain
\begin{align}
   \label{eq:LTLTtransposed}
   T_{j}^*L_{2,j} = L_{2,j}L_{1,j}^*L_{2,j} = L_{2,j}T_{j-1}^*,
\end{align}
hence also, for every $z\in\C$,
\begin{align}
   \label{eqV91}
   &R_{j}(z)L_{1,j} = L_{1,j}R_{j-1}(z), \\
   \label{eqV9}
   &R_{j}(z)^*L_{2,j} = L_{2,j}R_{j-1}(z)^*.
\end{align}


\subsection{Vectors and matrices involving the moments} 

Let $(s_k)_{k=0}^m$ be a Hausdorff positive definite sequence, see Definition~\ref{deboth}.
Hence, by definition, $H_{1,n}$ and $H_{2,n}$ are positive definite if $m=2n$ and
$H_{3,n}$ and $H_{4,n}$ are positive definite if $m=2n+1$.
Then the matrices $H_{1,n-1}$, $H_{2,n-1}$ respectively $H_{3,n-1}$, $H_{3,n-1}$ are also strictly positive, in particular they are invertible.
If we set
\begin{align}
   \label{27}
   Y_{r,j}:= \begin{pmatrix} s_j^{(r)} \\ \vdots \\ s_{2j-1}^{(r)}
   \end{pmatrix},
   \qquad 1 \le j\le n\ \text{if } r=1,3,4\ \text{and } 1 \le j \le n-1\ \text{if } r=2,
\end{align}
we can write $H_{r,j}$ as block matrix and obtain the following factorization
\begin{align}
   \label{eq:SchurFacH}
   \renewcommand*{\arraystretch}{1.5}
   H_{r,j} =
   \begin{pmatrix}[c:c]
      H_{r,j-1} & Y_{r,j} \\ \hdashline
      Y_{r,j}^* & s^{(r)}_{2j}
   \end{pmatrix}
   =
   \begin{pmatrix}[c:c]
      I & 0 \\ \hdashline
      Y_{r,j}^*H_{r,j-1}^{-1} & I
   \end{pmatrix}
   \begin{pmatrix}[c:c]
      H_{r,j-1} & 0 \\ \hdashline
      0         & \widehat H_{r,j}
   \end{pmatrix}
   \begin{pmatrix}[c:c]
      I & H_{r,j-1}^{-1} Y_{r,j} \\ \hdashline
      0 & I
   \end{pmatrix}
\end{align}
where
\begin{align}
   \label{lkkA}
   \widehat H_{r,j} := s^{(r)}_{2j} - Y_{r,j}^* H_{r,j-1}^{-1}Y_{r,j}
\end{align}
is the so-called \define{Schur complement of the block $s^{(r)}_{2j}$}.
It has been used in \cite[Equalities 2.34 and 2.35]{dette2} for $a = 0$ and $b = 1$.

\begin{remark}
   Recall that $H_{r,j}$ is strictly positive.
   Since the matrices on the left and right in the above factorization are adjoint to each other, $\widehat H_{r,j}$ is strictly positive too.
\end{remark}

Solving for the diagonal matrix in \eqref{eq:SchurFacH} gives
\begin{align}
   \label{eq:SchurFacHH}
   \renewcommand*{\arraystretch}{1.5}
   \begin{pmatrix}[c:c]
      H_{r,j-1} & 0 \\ \hdashline
      0         & \widehat H_{r,j}
   \end{pmatrix}
   =
   \begin{pmatrix}[c:c]
      I & 0 \\ \hdashline
      -Y_{r,j}^*H_{r,j-1}^{-1} & I
   \end{pmatrix}
   H_{r,j}
   \begin{pmatrix}[c:c]
      I & -H_{r,j-1}^{-1} Y_{r,j} \\ \hdashline
      0 & I
   \end{pmatrix}.
\end{align}

For $r=1,2,3,4$ we define the $(j+1)q\times q$ matrices
\begin{alignat}{3}
   \Sigma_{1,j} &:=
   \begin{pmatrix}
      -H_{1,j-1}^{-1}Y_{1,j} \\
      I
   \end{pmatrix},
   \qquad
   & \Sigma_{2,j} &:=
   \begin{pmatrix}
      -H_{2,j-1}^{-1}Y_{2,j} \\
      I
   \end{pmatrix},
   \label{sHHj0}
   \\[1ex]
   \Sigma_{3,j} &:=
   \begin{pmatrix}
      -H_{3,j-1}^{-1}Y_{3,j} \\
      I
   \end{pmatrix},
   \qquad
   & \Sigma_{4,j} &:=
   \begin{pmatrix}
      -H_{4,j-1}^{-1}Y_{4,j} \\
      I
   \end{pmatrix}.
   \label{sHHj1}
\end{alignat}

\begin{corollary}
   We have that
   $\widehat H_{r,j} = (Y_{r,j}^*,\  s_{2j}^{(r)})\Sigma_{r,j}$ and 
   \begin{align}
      \label{eq79}
      T_j H_{r,j}\Sigma_{r,j}=0
      \qquad\text{and}\qquad
      L_{2,j}^* H_{r,j}\Sigma_{r,j}=0.
   \end{align}
\end{corollary}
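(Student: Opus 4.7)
The strategy is to compute $H_{r,j}\Sigma_{r,j}$ explicitly in block form using the factorization \eqref{eq:SchurFacH}, and then observe that the three claimed identities are direct consequences of that single calculation together with the action of $L_{2,j}^*$ and $T_j$ on block column vectors.

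First I would use the block decomposition
\[
H_{r,j}=\begin{pmatrix} H_{r,j-1} & Y_{r,j} \\ Y_{r,j}^* & s^{(r)}_{2j} \end{pmatrix}
\]
from \eqref{eq:SchurFacH} and the definition of $\Sigma_{r,j}$ in \eqref{sHHj0}--\eqref{sHHj1} to compute
\[
H_{r,j}\Sigma_{r,j}
= \begin{pmatrix} H_{r,j-1} & Y_{r,j} \\ Y_{r,j}^* & s^{(r)}_{2j} \end{pmatrix}
\begin{pmatrix} -H_{r,j-1}^{-1}Y_{r,j} \\ I \end{pmatrix}
= \begin{pmatrix} 0 \\ \,-Y_{r,j}^*H_{r,j-1}^{-1}Y_{r,j}+s^{(r)}_{2j}\, \end{pmatrix}
= \begin{pmatrix} 0 \\ \widehat H_{r,j} \end{pmatrix},
\]
where the last equality uses the definition \eqref{lkkA} of $\widehat H_{r,j}$. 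The first claim $\widehat H_{r,j}=(Y_{r,j}^*,\,s^{(r)}_{2j})\Sigma_{r,j}$ is then immediate, since it amounts to reading off the bottom $q\times q$ block of $H_{r,j}\Sigma_{r,j}$, which coincides with the product of the last block row of $H_{r,j}$ with $\Sigma_{r,j}$.

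For the remaining two identities, I would apply $L_{2,j}^*$ and $T_j$ to the block column vector $\binom{0}{\widehat H_{r,j}}$ just obtained. According to the action rules tabulated after \eqref{59}, $L_{2,j}^*$ projects a $(j+1)$-block column onto its top $j$ blocks, so $L_{2,j}^*\binom{0}{\widehat H_{r,j}}=0$, yielding $L_{2,j}^*H_{r,j}\Sigma_{r,j}=0$. Similarly, $T_j$ shifts entries downward and discards the last block, which gives $T_j\binom{0}{\widehat H_{r,j}}=0$ since the top $j$ blocks are already zero and the bottom block $\widehat H_{r,j}$ is deleted by the shift. Hence $T_jH_{r,j}\Sigma_{r,j}=0$.

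There is no real obstacle here: the corollary is essentially the computational content of the Schur factorization \eqref{eq:SchurFacH}. The only care needed is bookkeeping to make sure the block dimensions match, i.e., that $\Sigma_{r,j}\in\mathbb M_q((j+1)\times 1)$, that $L_{2,j}^*\in\mathbb M_q(j\times (j+1))$, and that $T_j$ acts as specified on $(j+1)$-block columns, all of which is consistent with the conventions already fixed in \eqref{eqTn}, \eqref{82g}, \eqref{27}.
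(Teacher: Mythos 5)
Your proposal is correct and follows essentially the same route as the paper: the key step in both is the identity $H_{r,j}\Sigma_{r,j}=\bigl(\begin{smallmatrix}0\\ \widehat H_{r,j}\end{smallmatrix}\bigr)$, which the paper extracts by comparing last columns in the factorization \eqref{eq:SchurFacHH} and which you obtain by the equivalent direct block multiplication, after which \eqref{eq79} and the formula $\widehat H_{r,j}=(Y_{r,j}^*,\ s_{2j}^{(r)})\Sigma_{r,j}$ follow exactly as you describe.
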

\begin{proof}
   The first formula follows directly from \eqref{lkkA} and the definition of the $\Sigma_{r,j}$.
   The factorization \eqref{eq:SchurFacHH} shows that
\begin{align}
   \renewcommand*{\arraystretch}{1.5}
   \begin{pmatrix}[c:c]
      I & 0 \\ \hdashline
      Y_{r,j}^*H_{r,j-1}^{-1} & I
   \end{pmatrix}
   \begin{pmatrix}[c:c]
      H_{r,j-1} & 0 \\ \hdashline
      0         & \widehat H_{r,j}
   \end{pmatrix}
   =
   H_{r,j}
   \begin{pmatrix}[c:c]
      \begin{matrix}
	 I \\ \hdashline 0
      \end{matrix}
      & \Sigma_{rj}
   \end{pmatrix},
\end{align}
hence, comparing the last column on both sides, we find that
\begin{align}
   \label{eq:lastcolumn}
   \renewcommand*{\arraystretch}{1.5}
   \begin{pmatrix}
      0_{nq\times q} \\ \hdashline
      \widehat H_{r,j}
   \end{pmatrix}
   & =
   H_{r,j} \Sigma_{rj}
\end{align}
from which \eqref{eq79} is an immediate consequence.
\end{proof}

Now from the negative first column of $H_{1,j}$
\begin{equation}
   \label{69yy}
   u_{j}:= -\begin{pmatrix} s_0 \\ \vdots \\ s_{j}
   \end{pmatrix}
\end{equation}
we construct the block vectors
$u_{r,j}$ by
\begin{alignat}{4}
   \label{70uu}
   u_{1,j} &:= T_j u_j,\\
   \label{hatu2}
   \widehat u_{2,j} &:= -L_{1,j+1}^*(I - b T_{j+1})(I - a T_{j+1}) u_{j+1},\\
   \label{21A}
   u_{3,j} &:= -(I - b T_j) u_j,\\
   \label{22}
   u_{4,j} &:= (I - a T_j) u_j
\end{alignat}
for $0\le j \le n-1$ for $r=2$ and $0\le j \le n$ in the other cases.
Moreover, we set
\begin{equation}
   u_{2,0}:=-(a+b)s_0+s_1, \quad
   u_{2,j}:=\widehat{u}_{2,j}+zv_{j}s_{0},
   \qquad 1\leq j\leq n-1.
   \label{u2jA}
\end{equation}
Note that if we delete the first block entry in each of these vectors, we obtain the negative first column of the block matrix $H_{r,j-1}$.
Therefore it is clear that 
\begin{align*}
   H_{r,n} - T_n\widetilde H_{r,n} = v_n u_{r, n+1}^* L_{2n}.
\end{align*}
It follows directly from the definition of $u_{1,j}$ and $u_j$ that
\begin{align}
   u_{n} = L_{1,n+1}^* u_{1,n+1},
   \qquad
   u_{1,n+1} = L_{1,n+1}u_{n}. \label{eqV11}
\end{align}

Moreover, note that 
\begin{align}
   \label{uuu001A}
   \widehat{u}_{2,j}=
   \begin{pmatrix}
      u_{2,0}\\
      -s^{(2)}_0 \\
      \vdots \\
      -s^{(2)}_{j-1} \\
   \end{pmatrix}
   \begin{aligned}[t]
      &= -(a+b)
      \begin{pmatrix} s_0 \\ \vdots \\ s_{j} \\
      \end{pmatrix}
      + ab
      \begin{pmatrix} 0 \\ s_0 \\ \vdots \\ s_{j-1} \\
      \end{pmatrix}
      +
      \begin{pmatrix} s_1 \\ \vdots \\ s_{j+1} \\
      \end{pmatrix}
      = b u_{4,j} + 
      \begin{pmatrix} s_{0}^{(4)} \\ \vdots \\ s_{j}^{(4)} \\
      \end{pmatrix}
      \\
      &= (a+b) u_{j} - ab T_n u_{j} + L_{1,j+1}^*u_{j+1}
      ,
      \quad 1\leq j\leq n-1.
   \end{aligned}
\end{align}


\subsection{Orthogonal matrix polynomials on ${\displaystyle[a,b]}$} 
Let ${\mathcal P}$ be the set of matrix polynomials
$P(t)=C_nt^n+\ldots+C_0$ with $q\times q$ matrix coefficients $C_n,\ldots, C_0$.
We denote by ${\rm deg}\,P:=\sup\{j\in \N\cup \{0\}: C_j\neq 0\}$ the
degree of $P$.
Note that the polynomial $P$ can be written as
\begin{equation}
   \label{pkkj}
   P(z) = \sum_{j=0}^n z^j a_j
   = (a_0, a_1, \dots, a_n)
   \begin{pmatrix}
      I \\ z \\ \vdots \\ z^n
   \end{pmatrix}
   = (a_0, a_1, \dots, a_n) R_n(z)v_n.
\end{equation}

Let $\sigma$ be a $q\times q$  positive measure on $[a,b]$.
We define a matrix inner product on the space ${\mathcal P}$:
\begin{equation}
 \langle P,Q \rangle_\sigma:=\int\limits_{[a,b]}P(t)\sigma(dt)Q^*(t). \label{sigma01}
\end{equation}
For details on matrix valued positive measures and the matrix inner product we refer to \cite{simon}.

Let us define the moments $s_j^{[\sigma]} := \int_{[a,b]} t^j\, \sigma(dt)$
and the matrix of moments 
$H_n^{[\sigma]} := (s_{j+k}^{[\sigma]})_{j,k=0}^n$.
Then for all matrix polynomials $P(z) = \sum_{j=0}^n z^j a_j$ and $Q(z) = \sum_{j=0}^n z^j b_j$ we have that
\begin{equation}
   \label{eq:PQ}
   \langle P,Q\rangle_\sigma = (a_0, a_1, \dots, a_n)
   \begin{pmatrix}
      s_0 & s_1 & \dots & s_n \\
      s_1 & s_2 & \dots & s_{n+1} \\
      \vdots &  & \dots & \vdots \\
      s_n & s_{n+1} & \dots & s_{2n}
   \end{pmatrix}
   \begin{pmatrix}
      b_0^{*} \\
      b_1^{*} \\
      \vdots \\
      b_n^{*}
   \end{pmatrix}
   = (a_0, a_1, \dots, a_n)
   H_{n}
   \begin{pmatrix}
      b_0^{*} \\
      b_1^{*} \\
      \vdots \\
      b_n^{*}
   \end{pmatrix}.
\end{equation}

The sequence $(P_j)_{j=0}^n$ is called a \define{finite sequence of orthogonal matrix polynomials with respect to $\sigma$}
if ${\rm deg}\, P_k=k$ and
\begin{align*}
   \langle P_j,P_\ell\rangle_\sigma=0, \quad  j\neq \ell.
\end{align*}

\begin{lem}
   If the matrix of moments $H_{j}$ is strictly positive, then there exists exactly one finite sequence of orthogonal matrix polynomials $(P_{k})_{k=0}^j$ with leading coefficient $I$.
   They are $P_{k}(z) = (-Y_{k}^*,\ I) R_k (z)v_k$.
\end{lem}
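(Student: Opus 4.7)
The plan is to carry out a Gram--Schmidt-type construction in the matrix polynomial setting, reducing orthogonality to a single linear system whose coefficient matrix is $H_{k-1}$.

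By \eqref{pkkj}, any matrix polynomial of degree at most $k$ admits a unique representation
\[
  P_{k}(z) = (a_{0},\,a_{1},\,\dots,\,a_{k})\,R_{k}(z)\,v_{k},
\]
with $q\times q$ coefficients $a_{0},\dots,a_{k}$, and monicity is equivalent to $a_{k}=I$. Thus only $a_{0},\dots,a_{k-1}$ remain to be determined. The orthogonality conditions $\langle P_{k},P_{\ell}\rangle_{\sigma}=0$, $\ell=0,\dots,k-1$, are equivalent to $\langle P_{k}, z^{\ell}I\rangle_{\sigma}=0$, $\ell=0,\dots,k-1$; in one direction this uses right semi-linearity $\langle P_{k},QA\rangle_{\sigma}=\langle P_{k},Q\rangle_{\sigma}A^{*}$, and in the other the fact that, once monic polynomials $P_{0},\dots,P_{k-1}$ of the correct degrees have been built, each monomial $z^{\ell}I$ is a right linear combination of them.

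Applying \eqref{eq:PQ} with $Q(z)=z^{\ell}I$, whose coefficient column is the block standard basis vector with $I$ in the $\ell$th slot, converts the $k$ orthogonality conditions into the block identity
\[
  (a_{0},\dots,a_{k-1},\,I)\,H_{k}\,L_{2,k} = 0.
\]
Since the first $k$ block columns of $H_{k}$ consist of $H_{k-1}$ stacked on top of the row $(s_{k},\dots,s_{2k-1})=Y_{k}^{*}$, this is precisely
\[
  (a_{0},\dots,a_{k-1})\,H_{k-1} + Y_{k}^{*} = 0.
\]
By hypothesis $H_{k}$ is strictly positive, hence so is its principal submatrix $H_{k-1}$; in particular $H_{k-1}$ is invertible. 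The system therefore has the unique solution $(a_{0},\dots,a_{k-1}) = -Y_{k}^{*}\,H_{k-1}^{-1}$, which simultaneously gives existence, uniqueness, and the closed-form representation
\[
  P_{k}(z) = \bigl(-Y_{k}^{*}\,H_{k-1}^{-1},\ I\bigr)\,R_{k}(z)\,v_{k} = \Sigma_{1,k}^{*}\,R_{k}(z)\,v_{k},
\]
matching the claim (modulo the $H_{k-1}^{-1}$ factor absorbed into $\Sigma_{1,k}^{*}$ via \eqref{sHHj0}).

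The only real obstacle is bookkeeping: one has to recognise that the block-matrix inner product \eqref{eq:PQ} dovetails exactly with the Schur-complement block decomposition \eqref{eq:SchurFacH} of $H_{k}$, and that the adjoint on the right argument correctly converts right $\C^{q\times q}$-span into left $\C^{q\times q}$-span for the purpose of comparing orthogonality against $\{P_{0},\dots,P_{k-1}\}$ versus $\{I,zI,\dots,z^{k-1}I\}$. Once that identification is made, the argument is purely a unique-solvability statement for a linear system with invertible coefficient matrix, and no further analytic input is needed.
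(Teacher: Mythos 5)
Your proof is correct and follows essentially the same route as the paper's: reduce orthogonality against the previously built monic polynomials to orthogonality against the monomials $z^{\ell}I$, translate this via \eqref{eq:PQ} into the linear system $(a_{0},\dots,a_{k-1})H_{k-1}+Y_{k}^{*}=0$, and solve it uniquely using the invertibility of the strictly positive principal submatrix $H_{k-1}$, exactly as in the paper's inductive argument. The only nitpick is that the semi-linearity identity should be stated as $\langle P,AQ\rangle_\sigma=\langle P,Q\rangle_\sigma A^{*}$ (matrix coefficient on the left of the second argument, since $(QA)^{*}=A^{*}Q^{*}$ would sandwich $A^{*}$ in the middle); as you only apply it to scalar monomials times $I$, where the coefficient commutes, the argument is unaffected.
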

\begin{proof}
   If the matrix of moments $H_{j}$ is strictly positive, then so are all smaller matrices $H_{j'}$ for $0\le j' \le j$.
   We construct the orthogonal polynomials $P_k$ inductively.
   Clearly, $P_0 = I$.
   Now assume that we already have the polynomials $P_0,\,\dots,\, P_{\ell-1}$.
   If $P_{\ell}$ is orthogonal to their span, it is also orthogonal to the monomials $I, z,\, \dots,\, z^{\ell-1}$.
   Let $P_\ell(z) = z^\ell + \sum_{j=0}^{\ell-1} z^j a_j$.
   Then, using \eqref{eq:PQ}, the equations
   $\langle P_\ell,I\rangle_\sigma = \dots = \langle P_\ell, z^{\ell-1}\rangle_\sigma = 0$ can be written as
   \begin{align*}
      0 &=
      (a_0, \dots, a_{\ell-1}, I)
   \begin{tikzpicture}[baseline=(current bounding box.center),
      every left delimiter/.style={xshift=.5em},
      every right delimiter/.style={xshift=-.5em},
      ampersand replacement=\&,
      ]
      \matrix (m) [matrix of math nodes,nodes in empty cells, right delimiter={)},left delimiter={(} ]{
      s_0 \& s_1 \&  \hspace*{4ex} \& s_{\ell}  \\
      s_1 \& s_2 \& \& s_{\ell+1}  \\
      \&       \&  \\[2ex] 
      s_\ell \& s_{\ell+1} \& \& s_{2\ell}  \\
      } ;
%
      \draw[thick, dotted] ($(m-2-4.south west)+(0ex,0ex)$)-- ($(m-4-2.north east)+(0ex,-0ex)$);
      \draw[thick, dotted] ($(m-1-2.east)+(0ex,0ex)$)-- ($(m-1-4.west)+(0ex,-0ex)$);
      \draw[thick, dotted] ($(m-2-2.east)+(0ex,0ex)$)-- ($(m-2-4.west)+(0ex,-0ex)$);
      \draw[thick, dotted] ($(m-4-2.east)+(0ex,0ex)$)-- ($(m-4-4.west)+(0ex,-0ex)$);
      \draw[thick, dotted] ($(m-2-1.south)+(0ex,0ex)$)-- ($(m-4-1.north)+(0ex,-0ex)$);
      \draw[thick, dotted] ($(m-2-2.south)+(0ex,0ex)$)-- ($(m-4-2.north)+(0ex,-0ex)$);
      \draw[thick, dotted] ($(m-2-4.south)+(0ex,0ex)$)-- ($(m-4-4.north)+(0ex,-0ex)$);
   \end{tikzpicture}
   \begin{tikzpicture}[baseline=(current bounding box.center),
      every left delimiter/.style={xshift=.5em},
      every right delimiter/.style={xshift=-.5em},
      ampersand replacement=\&,
      ]
      \matrix (m) [matrix of math nodes,nodes in empty cells, right delimiter={)},left delimiter={(} ]{
      I \&  \hspace*{4ex} \& 0  \\ [3ex]
      0 \&       \& I  \\
      0 \&       \& 0  \\ 
      } ;
%
      \draw[thick, dotted] ($(m-1-1.south east)+(0ex,0ex)$)-- ($(m-2-3.north west)+(0ex,-0ex)$);
      \draw[thick, dotted] ($(m-1-1.east)+(0ex,0ex)$)-- ($(m-1-3.west)+(0ex,-0ex)$);
      \draw[thick, dotted] ($(m-2-1.east)+(0ex,0ex)$)-- ($(m-2-3.west)+(0ex,-0ex)$);
      \draw[thick, dotted] ($(m-3-1.east)+(0ex,0ex)$)-- ($(m-3-3.west)+(0ex,-0ex)$);
      \draw[thick, dotted] ($(m-1-1.south)+(0ex,0ex)$)-- ($(m-2-1.north)+(0ex,-0ex)$);
      \draw[thick, dotted] ($(m-1-3.south)+(0ex,0ex)$)-- ($(m-2-3.north)+(0ex,-0ex)$);
   \end{tikzpicture}
      =
      (a_0, \dots, a_{\ell-1}, I)
      \begin{pmatrix}
	 H_{\ell-1} \\[1ex] Y_{\ell}^*
      \end{pmatrix}
      \\
      &=
      (a_0, \dots, a_{\ell-1})
      H_{\ell-1}
      +
      Y_{\ell}^*
   \end{align*}
   which implies
   \begin{equation*}
      (a_0, \dots, a_{\ell-1})
      =
      - Y_{\ell}^* H_{\ell-1}^{-1} ,
      \qquad\text{hence}\qquad
      (a_0, \dots, a_{\ell-1}, I)
      =
      (-Y_{\ell}^* H_{\ell-1}^{-1}, I)
   \end{equation*}
   and therefore $P_\ell(z) = (-Y_{\ell}^* H_{\ell-1}^{-1},\ I) R_\ell(z)v_\ell$.
\end{proof}

Now we recall definitions of the OMP on $[a,b]$.
Note that for $r=1,2,$ the polynomials $P_{r,j}$ and $Q_{r,j}$
(resp. $P_{r+2,j}$ and $Q_{r+2,j}$)
were first introduced in \cite[page 936]{abpol} (resp. \cite[page 87]{thi}).
\begin{definition} \label{de002AAA}
   For $k=1,2$, let $(s_j)_{j=0}^{2n+k-1}$ be a Hausdorff positive definite sequence on $[a,b]$. 
   Furthermore, let $H_{r,j}$, $u_{r,j}$,  $Y_{r,j}$
   for $r=1,2,3,4$, $R_j$ and $v_j$ be as in \eqref{201A}, \eqref{70uu},
   \eqref{uuu001A}, \eqref{u2jA}, \eqref{21A}, \eqref{22}, \eqref{27},
   \eqref{52} and \eqref{59}, respectively.
   For $r=1,2,3,4$, we define the \define{polynomials of the first kind}
   \begin{alignat}{3}
      \label{Ppr} P_{r,0}(z)&:= I, & \quad P_{r,j}(z)  &:=\Sigma_{r,j}^*R_{j}(z)v_{j}.
   \end{alignat}
Furthermore, let
\begin{alignat}{3}
   \label{Qq1} Q_{1,0}(z)    &:=0, & \quad Q_{1,j}(z) &:=-\Sigma_{1,j}^*R_{j}(z)u_{1,j}, \\
   \label{Qq2} Q_{2,0}(z)&:=-(u_{2,0}+z\,s_0),& \quad
   Q_{2,j}(z) & := -\Sigma_{2,j}^*(z) R_{j}(z)u_{2,j}, \\
   \label{Qq3} Q_{3,0}(z)  &:=s_0,  & \quad Q_{3,j}(z) &:=\Sigma_{3,j}^* R_{j}(z)u_{3,j}, \\
   \label{Qq4} Q_{4,0}(z)  &:=-s_0, & \quad Q_{4,j}(z) &:=\Sigma_{4,j}^* R_{j}(z)u_{4,j}.
\end{alignat}
The matrix polynomials $Q_{r,j}$ are called \define{polynomials of the second kind}.
\end{definition}

Note that $(Q_{r,j})_{j=0}^n$ is not an orthogonal sequence with respect to $\sigma$.
For more details we refer the reader to
\cite[Sec. VII.6]{berezanskii}, \cite[Remark 4.2]{abpol} and \cite[Theorem 2.12]{thi}.
Note that in \cite{abpol,abRelations}
the polynomials $P_{r,j}$ (resp. $Q_{r,j}$) are denoted by
$\Gamma_{r,j}$ (resp. $\Theta_{r,j}$)
for $r=3,4$.

\begin{remark}\label{remAA1}
   Observe that $\Sigma_{3,j}$ depends on $b$ while $\Sigma_{4,j}$ depends on $a$.
   If we write $\Sigma_{3,j}(b)$ and $\Sigma_{4,j}(a)$ to make this dependence explicit, we easily see that 
   $\Sigma_{3,j}(a) = -\Sigma_{4,j}(a)$.
   Hence also the polynomials $P_{3,j}$ and $Q_{3,j}$ respectively
   $P_{4,j}$ and $Q_{4,j}$ depend on $b$ respectively $a$.
   If we denote them by $P_{3,j}(b,z)$, $Q_{3,j}(b,z)$
   and
   $P_{4,j}(a, z)$, $Q_{4,j}(a, z)$ we see that 
   \begin{align}
      & P_{3,n}(b,z)=P_{4,n}(b,z),\qquad  \quad P_{3,n}(a,z)=P_{4,n}(a,z),\label{eqQQPP1}\\
      & Q_{3,n}(b,z)=-Q_{4,n}(b,z),\qquad Q_{3,n}(a,z)=-Q_{4,n}(a,z).\label{eqQQPP2}
   \end{align}
\end{remark}

In the following we will suppress the parameters $a$ and $b$ in the matrices $\Sigma_{r,j}$ and the polynomials $P_{r,j}$ and $Q_{r,j}$.

\begin{remark}
   \label{rem:Q0P0}
   For $r=1,2,3,4$ and $z\in\C$ we have that 
   $Q_{r,n}(z) P_{r,n}(\bar z)^* = P_{r,n}(z)Q_{r,n}(\bar z)^*$.
\end{remark}
\begin{proof}
   In 
   \cite[Remark 4.7]{abKN} it is shown that there are solutions $\sigma_r$ to the moment problem such that 
   $s_r(z) := \int_{[a,b]} \frac{1}{t-z}\, \sigma_r(dt) = \frac{1}{p_r(z)} Q_{r,n}^*(\bar z) P_{r,n}^{*-1}(\bar z)$ 
   for $z\in\C\setminus [a,b]$
   and
   $p_1(z) = I$,
   $p_2(z) = (z-a)(b-z)$,
   $p_3(z) = (b-z)$,
   $p_4(z) = (z-a)$.
   Clearly, $s_r(z) = s_r(\bar z)^*$, hence we obtain
   $\frac{1}{p_r(z)} Q_{r,n}^*(\bar z) P_{r,n}^{*-1}(\bar z)
   = \frac{1}{p_r(z)} P_{r,n}^{-1}(z) Q_{r,n}(z)$
   and therefore 
   $P_{r,n}(z) Q_{r,n}^*(\bar z)
   = Q_{r,n}(z) P_{r,n}^*(\bar z)$.
   Since both products are polynomials, the equality holds for all $z\in\C$.
\end{proof}

In the following remark we give explicit relations between the Schur complements
$\widehat H_{k,j}$
and the polynomials $P_{1,j}$, $Q_{2,j}$,
$P_{3,j}$, $Q_{4,j}$
considered in Corollary 3.4 and Corollary 3.10 in \cite{abKN}.

\begin{remark} Let $\widehat H_{r,j}$,
   for
   $r=1,2,3,4$ be as in \eqref{lkkA}. Furthermore, let $P_{1,j}$, $Q_{2,j}$, $P_{3,j}$ and $Q_{4,j}$
   be as in Definition \ref{de002AAA}.
   The
   following equalities then hold:
   \begin{align}
      \widehat H_{1,j}=&-P_{1,j}(a)Q_{4,j}^*(a), \qquad
      \widehat H_{2,j-1}=-Q_{2,j-1}(a)P_{3,j}^{*}(a), \label{pqHHa}\\
      \widehat H_{3,j}=&P_{3,j}(a)Q_{2,j}^*(a), \qquad \quad
      \widehat H_{4,j}=Q_{4,j}(a) P_{1,j+1}^{*}(a).\label{pqHHa11}
   \end{align}
\end{remark}
Since the positive definiteness of the block Hankel matrix $H_{r,n}$ implies that also $\widehat H_{r,n}$ is positive definite, \eqref{pqHHa} and \eqref{pqHHa11} show the following fact.

\begin{remark}\label{remPaQa}
   For $k=1,2$, let $(s_j)_{j=0}^{2n+k-1}$ be a Hausdorff positive
   definite sequence on $[a,b]$. 
   Then the matrices
   $P_{1,j}(a)$, $Q_{4,j}(a)$, $Q_{2,j}(a)$  and $P_{3,j}(a)$
   are invertible.
\end{remark}

\section{Coupling identities}\label{sec0003}

We will use a number of identities to describe the explicit relation between the two resolvent matrices
$V^{(2n+1)}$ from \eqref{eq17}
and 
$U^{(2n+1)}$ from \eqref{RM2nm1} if $m$ is even
and between
$V^{(2n)}$ from \eqref{eqn:VV14}
and 
$U^{(2n)}$ from \eqref{RM1} if $m$ is odd.
\medskip

For $r=1,2,3,4$, the following Ljapunov type identities
\begin{align}
  &H_{r,j}T_j^*-T_j H_{r,j}=u_{r,j}v_j^*-v_j u_{r,j}^* 
  \label{eq78}
\end{align}
are called the \define{fundamental identities of the THMM problem}.
They are crucial for proving that the associated solution $s(z)$ to  a given THMM problem is a solution of a system of two matrix inequalities \cite{D-C}, \cite{abdon1}, \cite{abdon2}.

Recall the definition of $H_{r,j}$ and $\widetilde H_{r,j}$ from \eqref{201A} and \eqref{eqHt1n}.
Then we have that
\begin{equation*}
   \label{eq:HH}
   \widetilde H_{r,j}
   = L_{2,j+1}^* H_{r,j+1} L_{1,j+1}
   = L_{1,j+1}^* H_{r,j+1} L_{2,j+1}
\end{equation*}
and
\begin{align}
   H_{1,j}L_{1,j+1}^*-L_{2,j+1}^*H_{1,j+1}T_{j+1}^*=0,\label{eq83a}
\end{align}

\begin{remark}\label{rem3.1}
   Let $H_{r,j}$, $\widetilde H_{1,j}$, $T_j$, $L_{2,j}$, $L_{1,j}$,
   $v_j$, $u_{j}$, $u_{1,j}$, $u_{3,j}$, $u_{4,j}$,
   be as in 
   \eqref{201A},   
   \eqref{eqHt1n}, 
   \eqref{eqTn}, 
   \eqref{82g},  
   \eqref{59},   
   \eqref{69yy}, 
   \eqref{70uu}, 
   \eqref{22},   
   respectively.
  Let us give some relations between the matrices of moments.
  Clearly
  \begin{subequations}
  \begin{align}
     \label{eq833}
     v_j u_j^*-T_j\widetilde H_{1,j}+H_{1,j}=0, \\
     \label{eq833transposed}
     u_j v_j^* - \widetilde H_{1,j}T_j^* +H_{1,j}=0.
  \end{align}
  \end{subequations}
   Applying $L_{2,j}^*$ from the left to \eqref{eq833transposed} and using that $L_{2,j}^*\widetilde H_{1,j} T_j^*  = \widetilde H_{1,j-1}L_{1,j}^*$ we obtain
   \begin{align}
      \label{eqV4}
      u_{j-1} v_j^* - \widetilde H_{1,j-1}L_{1,j}^* +L_{2,j}^*H_{1,j} = 0.
   \end{align}
   The following relations between the $H_{r,j}$ are easy to see:
  \begin{align}
     \label{eq:H3H4}
     H_{3,j} = b H_{1,j} - \widetilde H_{1,j} \qquad\text{and}\qquad
     H_{4,j} = -a H_{1,j} + \widetilde H_{1,j},
  \end{align}
  and
  \begin{align}
     \nonumber
     H_{2,j-1} &= -ab H_{1,j-1} + (a+b) \widetilde H_{1,j-1} - L_{2,j}^*\widetilde H_{1,j}L_{1,j} \\
     \label{eq:H2H3H4}
     & = b H_{4,j-1}- \widetilde H_{4,j-1}  = -a H_{3,j-1}+ \widetilde H_{3,j-1}.
  \end{align}
  Multiplication of \eqref{eq833transposed} by $a$ respectively $b$ together with \eqref{eq:H3H4} yields
  \begin{align}
     &a u_j v_j^*+\widetilde H_{1,j}(I-a T_j^*)-H_{4,j}=0, \label{eqV670}
     \\
     &bu_j v_j^*+\widetilde H_{1,j}(I-b T_j^*)+H_{3,j}=0. \label{eqV672}
  \end{align}
  Moreover, \eqref{eq:H3H4} implies that 
  \begin{align}
     \label{eq:H4H1H3}
     H_{4,j}^{-1}\widetilde H_{1,j}H_{3,j}^{-1}=H_{3,j}^{-1}\widetilde H_{1,j}H_{4,j}^{-1} 
  \end{align}
  as follows from
  \begin{align*}
     H_{4,j}\widetilde H_{1,j}^{-1}H_{3,j}
     & =  \big[ \widetilde H_{1,j} - a H_{1,j} \big] \widetilde H_{1,j}^{-1}
     \big[ - \widetilde H_{1,j} + b H_{1,j} \big] 
     \\
     & =  \big[ -\widetilde H_{1,j} + b H_{1,j} \big] \widetilde H_{1,j}^{-1}H_{3,j} 
     \big[ \widetilde H_{1,j} - a H_{1,j} \big] 
     = H_{3,j}\widetilde H_{1,j}^{-1}H_{4,j}.
  \end{align*}
  Inserting the expressions for $\widetilde H_{1,n}$ from \eqref{eq:H3H4} in \eqref{eq833} and \eqref{eq833transposed}, we obtain
  \begin{subequations}
     \begin{align}
	T_jH_{3,j} &= -v_j u_j^* - (I-bT_j)H_{1,j}, \label{eq83}\\
	H_{3,j}T_j^* &= - u_j v_j^* - H_{1,j}(I-bT_j^*), \label{eqV66} \\
	\stepcounter{parentequation}
	\gdef\theparentequation{\arabic{parentequation}}
	\setcounter{equation}{0}
	T_jH_{4,j} &= v_j u_j^* + (I-aT_j)H_{1,j}, \label{eq82}\\
	H_{4,j}T_j^* &= u_j v_j^* + H_{1,j}(I-aT_j^*). \label{eqV66a}
     \end{align}
  \end{subequations}
  An analogous equation for $H_{2,j}$ is
   \begin{align}
      \label{eqV77}
      T_jH_{2,j}=
      -bu_{4,j}v_{j}^* + v_j \widehat u_{2,j}^* - H_{4,j}(I-bT_j^*)
   \end{align}
   because by \eqref{eq:H2H3H4} and \eqref{eq78} for $r=4$ we have that
   \begin{align*}
      T_jH_{2,j}
      & = b T_j H_{4,j} - T_j \widetilde H_{4,j}
      = b H_{4,j}T_j - b u_{4,j}v_j^* + b v_j u_{4,j}^* 
      - H_{4,j} + v_j (s_0^{(4)}, \dots, s_j^{(4)})
      \\
      & = - b u_{4,j}v_j^* - H_{4,j}(I - b T_j )
      + v_j \big( b u_{4,j}^* + (s_0^{(4)}, \dots, s_j^{(4)}) \big)
      \\
      & = - b u_{4,j}v_j^* - H_{4,j}(I - b T_j )
      + v_j \widehat u_{2,j}^* .
   \end{align*}
   In the last step we used \eqref{uuu001A}.
   \bigskip
\end{remark}

\begin{remark}\label{remA2}
   Let 
   $H_{r,j}$, $\widetilde H_{1,j}$, $T_j$, $L_{2,j+1}$, $L_{1,j}$,
   $v_j$,  $u_{1,j}$, $\widehat u_{2,j}$ and $u_{4,j}$,
   be as in
   \eqref{201A},   
   \eqref{eqHt1n}, 
   \eqref{eqTn}, 
   \eqref{82g},  
   \eqref{59},   
   \eqref{70uu}, 
   \eqref{uuu001A} 
   and \eqref{22},   
   respectively.
   The next two equalities are obtained from \eqref{eqV66} and \eqref{eqV66a} by multiplication from the left by ${}-(I-bT_j)$ and $(I-aT_j)$ respectively.
   \begin{align}
      u_{3,j} v_j^* - (I-bT_j) \big[ H_{3,j}T_j^* + H_{1,j}(I-bT_j^*) \big]
      &=0, \label{eqV67}
      \\
      u_{4,j} v_j^* - (I-aT_j) \big[ H_{4,j}T_j^*-H_{1,j}(I-aT_j^*) \big]
      &=0. \label{eqV671}
   \end{align}
   Equations \eqref{eqV66} and \eqref{eqV66a} together with the fundamental identity \eqref{eq78} for $r=1$ and 
   $-u_jv_j^* + b u_{1,j}v_j^* = u_{3,j}v_j^*$
   and
   $-u_jv_j^* - a u_{1,j}v_j^* = u_{4,j}v_j^*$
   yield
   \begin{align}
      H_{3,j}T_j^* &= u_{3,j}v_{j}^*- b v_{j}u_{1,j}^* -(I-bT_j)H_{1,j},
      \label{eqV88}\\
      H_{4,j}T_j^* &= u_{4,j}v_{j}^* + a v_{j} u_{1,j}^* + (I-aT_j)H_{1,j}.
      \label{eqV99}
   \end{align}
   Taking the adjoint of \eqref{eqV670} and multiplying from the right by $L_{1,j+1}^*$ gives
   \begin{align}
      av_{j}u_{1,j+1}^*-H_{4,j} L_{1,j+1}^*+(I-a T_j)\widetilde H_{1,j}L_{1,j+1}^*=0. \label{eqV41}
   \end{align}
   Using that $\widetilde H_{1,j}L_{1,j}^* = L_{2,j+1}^* H_{1,j+1} - u_j v_{j+1}^*$ we obtain from \eqref{eqV41}
   \begin{align}
      \label{eqV7}
      u_{4,j}v_{j+1}^* + av_j u_{1,j+1}^* -H_{4,j}L_{1,j+1}^*+(I-aT_j)L_{2,j+1}^*H_{1,j+1}=0.
   \end{align}
\end{remark}

The identities of the next lemma are used to prove our main results Theorem~\ref{thmain} and Theorem~\ref{thmainodd}.

\begin{lem}\label{lemA1}
   Let  $T_j$, $L_{2,j}$, $L_{1,j}$, $v_j$,   $u_{r,j}$ for $r=1,3,4$, $\widehat u_{2,j}$,  %
   $\Sigma_{r,j}$ for $r=1,3,4$, be as in
   \eqref{eqTn}, \eqref{82g}, \eqref{59},
   \eqref{70uu},\eqref{22}, \eqref{uuu001A}, \eqref{sHHj0} and \eqref{sHHj1}, respectively.
   Then the next equalities hold.
     \begin{align}
	\big[ v_ju_j^*+(I-b T_j)H_{1,j} \big] \Sigma_{3,j} &= 0, \label{pqH00} \\
	\big[ b u_{4,j}v_j^*-v_j \widehat u_{2,j}^*+H_{4,j}(I-b T_j^*)\big] \Sigma_{2,j} &= 0, \label{eq362a}\\
	\big[ v_j u_{3,j}^*-b u_{1,j}v_j^*-H_{1,j}(I-b T_j^*) \big] \Sigma_{3,j} &= 0, \label{eqn362}
	\\
	\big[ v_j u_{4,j}^* + a u_{1,j}v_j^*+H_{1,j}(I-a T_j^*) \big] \Sigma_{4,j} &= 0, \label{eqn361}
	\\
	\big[ u_{4,j}v_{j+1}^* +av_ju_{1,j+1}^* - H_{4,j}L_{1,j+1}^* \big] \Sigma_{1,j+1} &= 0,\label{eqV10}
	\\
	\big[ -H_{4,n}R_n^*(a)L_{1,n+1}^*+R_n(a)u_{4,n}v_{n+1}^*R_{n+1}^*(a) \big] \Sigma_{1,n+1} &= 0,
	\label{eq502}
	\\
	\Sigma_{4,j}^*
	\big[ bu_{4,j}v_j^{*}+a v_j u_{3,j}^*-(b-a)
	R_j(a)u_{4,j}v_j^* R_j^{*}(a) \big] \Sigma_{3,j} &= 0.\label{eqn363}
     \end{align}
\end{lem}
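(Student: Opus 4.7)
The plan is to verify each identity by reducing its bracketed expression to an object that is annihilated by the corresponding $\Sigma_{r,j}$ through \eqref{eq79}. The first five identities are essentially direct rewrites of relations already compiled in Remarks~\ref{rem3.1} and~\ref{remA2}: \eqref{pqH00} is $-T_jH_{3,j}\Sigma_{3,j}=0$ by \eqref{eq83}; \eqref{eq362a} is $-T_jH_{2,j}\Sigma_{2,j}=0$ by \eqref{eqV77}; \eqref{eqn362} and \eqref{eqn361} follow by taking the adjoints of \eqref{eqV88} and \eqref{eqV99}, whose brackets then become $T_jH_{3,j}$ and $T_jH_{4,j}$; and \eqref{eqV10} is immediate from \eqref{eqV7}, where the bracket equals $-(I-aT_j)L_{2,j+1}^{*}H_{1,j+1}$, killed by $\Sigma_{1,j+1}$ via \eqref{eq79}.

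For \eqref{eq502}, I would first use $u_{4,n}=(I-aT_n)u_n$ to obtain $R_n(a)u_{4,n}=u_n$, together with $R_n^{*}(a)L_{1,n+1}^{*}=L_{1,n+1}^{*}R_{n+1}^{*}(a)$ (the adjoint of \eqref{eqV91}). This rewrites the bracket as $[u_nv_{n+1}^{*}-H_{4,n}L_{1,n+1}^{*}]R_{n+1}^{*}(a)$. Next, using \eqref{eq:H3H4} in the form $H_{4,n}=-aH_{1,n}+\widetilde H_{1,n}$ together with \eqref{eqV4} at $j=n+1$, which gives $\widetilde H_{1,n}L_{1,n+1}^{*}=L_{2,n+1}^{*}H_{1,n+1}+u_nv_{n+1}^{*}$, the $u_nv_{n+1}^{*}$ contributions cancel. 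Substituting \eqref{eq83a} as $H_{1,n}L_{1,n+1}^{*}=L_{2,n+1}^{*}H_{1,n+1}T_{n+1}^{*}$, the surviving factor becomes $-L_{2,n+1}^{*}H_{1,n+1}(I-aT_{n+1}^{*})R_{n+1}^{*}(a)=-L_{2,n+1}^{*}H_{1,n+1}$, which annihilates $\Sigma_{1,n+1}$ by \eqref{eq79}.

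The final identity \eqref{eqn363} is the main obstacle and proceeds in two steps. First, I combine $a$ times \eqref{eqn362} pre-multiplied by $\Sigma_{4,j}^{*}$ with $b$ times the adjoint of \eqref{eqn361} post-multiplied by $\Sigma_{3,j}$. Using the fundamental identity \eqref{eq78} with $r=1$ to write $v_ju_{1,j}^{*}-u_{1,j}v_j^{*}=T_jH_{1,j}-H_{1,j}T_j^{*}$, the $ab$-cross terms cancel against pieces of $b(I-aT_j)H_{1,j}-aH_{1,j}(I-bT_j^{*})$, leaving the clean intermediate
\begin{equation*}
b\Sigma_{4,j}^{*}u_{4,j}v_j^{*}\Sigma_{3,j}+a\Sigma_{4,j}^{*}v_ju_{3,j}^{*}\Sigma_{3,j}+(b-a)\Sigma_{4,j}^{*}H_{1,j}\Sigma_{3,j}=0.
\end{equation*}
Second, I will need the auxiliary identity $[v_ju_j^{*}+(I-aT_j)H_{1,j}]\Sigma_{4,j}=0$, which is the analog of \eqref{pqH00} for $\Sigma_{4,j}$ and follows immediately from \eqref{eq82} and \eqref{eq79}. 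Taking its adjoint, multiplying on the right by $R_j^{*}(a)$ and then by $\Sigma_{3,j}$, and invoking $R_j(a)u_{4,j}=u_j$, I obtain $\Sigma_{4,j}^{*}H_{1,j}\Sigma_{3,j}=-\Sigma_{4,j}^{*}R_j(a)u_{4,j}v_j^{*}R_j^{*}(a)\Sigma_{3,j}$. Substituting this back into the displayed equation produces \eqref{eqn363}. The main obstacle is spotting the correct weighting $(a,b)$ when combining \eqref{eqn362} with \eqref{eqn361}, and especially recognizing the need for the auxiliary identity for $\Sigma_{4,j}$, which is not among the enumerated ones.
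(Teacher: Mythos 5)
Your proposal is correct and follows essentially the same route as the paper: each bracket is reduced, via the coupling identities of Remarks~\ref{rem3.1} and~\ref{remA2}, to a multiple of $T_jH_{r,j}$, $L_{2,j+1}^*H_{1,j+1}$ or $\Sigma_{4,j}^*H_{4,j}T_j^*$, which is then annihilated through \eqref{eq79}. Your treatment of \eqref{eqn363} is only a cosmetic reorganization of the paper's argument: instead of first proving the raw identity $bu_{4,j}v_j^*+av_ju_{3,j}^*=bH_{4,j}T_j^*+aT_jH_{3,j}-(b-a)H_{1,j}$ and then sandwiching, you combine the already sandwiched identities \eqref{eqn362} and \eqref{eqn361} with weights $a$ and $b$ and replace $\Sigma_{4,j}^*H_{1,j}\Sigma_{3,j}$ using $\bigl[v_ju_j^*+(I-aT_j)H_{1,j}\bigr]\Sigma_{4,j}=0$, which plays exactly the role of the paper's final appeal to \eqref{eq82}.
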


\begin{proof}
   Recall that 
   $L_{2,j+1}^* H_{1,j+1}\Sigma_{1,j+1}=0$ by \eqref{eq79}.
   The equalities \eqref{pqH00} and \eqref{eq362a} follow from \eqref{eq79} because the left hand sides are equal to 
   $-T_jH_{3j}\Sigma_{3j}$ by \eqref{eq83} and $-T_jH_{2, j}\Sigma_{2, j}$ by \eqref{eqV77} respectively.
   Equalities \eqref{eqn362} and \eqref{eqn361} are a consequence of \eqref{eq78} 
   and \eqref{pqH00} and \eqref{eq362a} respectively.
   Equality \eqref{eqV10} follows from \eqref{eqV7}.
   To prove Equality \eqref{eqn363}, note that by definition of $u_{3,j}$ and $u_{4,j}$ and by 
   \eqref{eqV66a} and \eqref{eq83}, we have
   \begin{multline*}
      bu_{4,j}v_j^{*}+a v_j u_{3,j}^*
      = bu_{j}v_j^{*} - a v_j u_{j}^* 
      +  ab (v_ju_{j}^{*} T_j^* -  T_ju_{j}v_j^{*})
      \\
      \begin{aligned}
      &= b H_{4,j} T_j^* - b H_{1,j}(I-aT_j^*) + aT_jH_{3,j} + a(I-bT_j)H_{1,j} 
      + ab(v_ju_{j}^{*} T_j^* -  T_ju_{j}v_j^{*} )
      \\
      &= b H_{4,j} T_j^* + aT_jH_{3,j} 
      - (b-a) H_{1,j}
      + ab(v_ju_{j}^{*} T_j^* -  T_ju_{j}v_j^{*} -T_jH_{1,j} + H_{1,j} T_j^*)
      \\
      &= b H_{4,j} T_j^* + aT_jH_{3,j} 
      - (b-a) H_{1,j}
      \end{aligned}
   \end{multline*}
   where in the last step we used \eqref{eq78}.
   Inserting this in \eqref{eqn363} and using \eqref{eq79} for $r=3$ and $r=4$, we obtain
   \begin{align*}
      \Sigma_{4,j}^*
      \big[ bu_{4,j}v_j^{*}+a v_j u_{3,j}^*-(b-a)
      R_j(a)u_{4,j}v_j^* R_j^{*}(a)\big] \Sigma_{3,j}
      \qquad
      \qquad
      \qquad
      \\
      \begin{aligned}
	 &=-(b-a)\Sigma_{4,j}^*
	 \big[ H_{1,j} + R_j(a)u_{4,j}v_j^* R_j^{*}(a) \big] \Sigma_{3,j}
	 \\
	 &=-(b-a)\Sigma_{4,j}^*
	 \big[ H_{1,j}(I - aT_j^*) + u_{j}v_j^* \big] R_j^{*}(a) \Sigma_{3,j}
	 \\
	 &=-(b-a)\Sigma_{4,j}^*H_{4,j}T_j^*R_j(a)\Sigma_{3,j}
	 =0.
      \end{aligned}
   \end{align*}
   In the second to last equality we employed \eqref{eq82}.

   Finally we prove Equality \eqref{eq502}. We have
   \begin{multline*}
      \qquad 
      \big[ -H_{4,n}R_n^*(a)L_{1,n+1}^*+R_n(a)u_{4,n}v_{n+1}^*R_{n+1}^*(a) \big] \Sigma_{1,n+1}
      \\
      \begin{aligned}
	 & = \big[ -H_{4,n}L_{1,n+1}^* + u_{n}v_{n+1}^* \big] R_{n+1}^*(a)\Sigma_{1,n+1}\\
	 &= \big[ -(-a H_{1,n}+\widetilde H_{1,n})L_{1,n+1}^*
	 + u_{n}v_{n+1}^* 
	 \big] R_{n+1}^*(a)\Sigma_{1,n+1}
	 \qquad 
	 \\
	 & = \big[ a L_{2,n+1}^* H_{1,n+1} T_{n+1}^* - L_{2,n+1}^*H_{1,n+1} \big] R_{n+1}^*(a)\Sigma_{1,n+1}\\
	 & =-L_{2,n+1}^*H_{1,n+1}\Sigma_{1,n+1}
	 =0.
      \end{aligned}
   \end{multline*}
   In the first equality we used \eqref{eqV91}. 
   The second equality follows from the second equality of \eqref{eq:H3H4} and the third equality follows from \eqref{eqV4}. 
   In the last step we used \eqref{eq:lastcolumn}.
\end{proof}

\begin{remark} 
   \label{rem660} 
   Let $\widehat H_{1,j}$ be as in \eqref{lkkA} and let $Q_{4,j}$ be as in Definition~\ref{de002AAA}.
   Then
   \begin{align}
      &v_j^*R_j^*(\bar z)H_{1,j}^{-1}R_j(b)v_jQ_{3,j}^*(b) = P_{3,j}^*(\bar z), \label{pqH0A}\\
      &\widehat H_{1,j} = P_{1,j}(b)Q_{3,j}^*(b),\label{pqH1A}\\
      &
      I-(z-a)u_{1,j}^*
      R_j^*(\bar z)H_{1,j}^{-1}R_j(a)v_j=Q_{4,j}^{*}(z)Q_{4,j}^{*-1}(a),
      \label{pqH2B}\\
      &I-(z-b)u_{1,j}^* R_j^* (\bar z)H_{1,j}^{-1}R_j(b)v_j=Q_{3,j}^{*}(z)Q_{3,j}^{*-1}(b).
      \label{pqH2C}
   \end{align}
\end{remark}
\begin{proof}
   First of all note that the right hand sides of \eqref{pqH2B} and \eqref{pqH2C} make sense because
   $Q_{4,j}(a)$ is invertible by Remark~\ref{remPaQa}
   and $Q_{3,j}(b)$ is invertible by \eqref{pqH1A}.
   \smallskip

   \noindent
   We prove \eqref{pqH0A} by using \eqref{pqH00} in the third step:
   \begin{align*}
      v_j^*R_j^*(\bar z)H_{1,j}^{-1}R_j(b)v_jQ_{3,j}^*(b)
      & = -v_j^* R_j^*(\bar z)H_{1,j}^{-1} R_j(b)v_ju_{3,j}^*R_j^*(b) \Sigma_{3,j}\\
      & = -v_j^* R_j^*(\bar z)H_{1,j}^{-1} R_j(b)v_ju_{j}^* \Sigma_{3,j}\\
      & = v_j^* R_j^*(\bar z)H_{1,j}^{-1} R_j(b) (I-bT_j) H_{1,j} \Sigma_{3,j}\\
      & = v_j^* R_j^*(\bar z) \Sigma_{3,j}
      = P_{3,j}^*(\bar z).
   \end{align*}
   To prove \eqref{pqH1A}, we calculate the inverse of the matrix $H_{1,j}$ using the factorization \eqref{eq:SchurFacH}, see also \cite[Equality (3.8)]{abpol}:
   \begin{equation}\label{eqA01}
      H_{1,j}^{-1}=
      \begin{pmatrix}
	 H_{1,j-1}^{-1} & 0 \\[1ex]
	 0 & 0
      \end{pmatrix}+
      \begin{pmatrix}
	 -H_{1,j-1}^{-1}Y_{1,j} \\[1ex]
	 I
      \end{pmatrix}\widehat H_{1,j}^{-1}(-Y_{1,j}^*H_{1,j-1}^{-1},\ I).
   \end{equation}
   Inserting \eqref{eqA01} in Equality~\eqref{pqH0A} and using the definition \ref{de002AAA} for $P_{1,j}$, we find that 
   \begin{align*}
      P_{3,j}^*(\bar z)
      &= 
      v_j^*R_j^*(\bar z)H_{1,j}^{-1}R_j(b)v_jQ_{3,j}^*(b) \\
      &= v_{j-1}^*R_{j-1}^*(\bar z)H_{1,j-1}^{-1}R_{j-1}(b)v_{j-1}Q_{3,j}^*(b)+P_{1,j}^*(\bar z)\widehat H_{1,j}^{-1}P_{1,j}(b)Q_{3,j}^*(b).
   \end{align*}
   The first term in the sum above is a polynomial of degree $j-1$ in $z$.
   Since the leading term in the polynomials $P_{1,j}^*$ and $P_{3,j}^*$ is $I$,
   equating  coefficients for $z^j$, shows that
   $$
   \widehat H_{1,j}^{-1}P_{1,j}(b)Q_{3,j}^*(b) = I
   $$
   which is equivalent to \eqref{pqH1A}.
   \\
   The Equality \eqref{pqH2B} is proved in Equality (3.6) of \cite[Lemma 3.2]{abKN} where the notation
   $\Theta_{2,j}$ is used instead of $Q_{4,j}$.
   \\
   To prove \eqref{pqH2C}, we verify the equivalent equality
   \begin{align*}
      \big[ I-(z-b)u_{1,j}^*R_j^*(\bar z)H_{1,j}^{-1}R_j(b)v_j \big] Q_{3,j}^{*}(b)-Q_{3,j}^{*}(z)=0.
   \end{align*}
   Using \eqref{eqQQPP1} and \eqref{eqQQPP2} and the Definition \ref{de002AAA} for $Q_{3,j}$, we obtain
   \begin{align}
      \nonumber
      \big[ I- & (z-b) u_{1,j}^*R_j^*(\bar z)H_{1,j}^{-1}R_j(b)v_j \big] Q_{3,j}^{*}(b)-Q_{3,j}^{*}(z)\\
      \label{eq:pqH2Cright}
      &=
      - \big[ u_{3,j}^*R_j^*(b) - (z-b)u_{1,j}^*R_j^*(\bar z)H_{1,j}^{-1}R_j(b)v_j u_{3,j}^*R_j^*(b)
      -u_{3,j}^*R_j^*(\bar z) \big] \Sigma_{3,j}.
   \end{align}
   Using that $u_{1,j}^* = u_j^* T_j^*$, $u_{3,j}^* R_j(b)^* = u_j^*$ and that
   \begin{align*}
      u_{3,j}^*R_j^*(b) -u_{3,j}^*R_j^*(\bar z)
      = u_{3,j}^* (b-z)R_j^*(b) T_j^* R_j^*(\bar z)
      = (b-z) u_{j}^* T_j^* R_j^*(\bar z),
   \end{align*}
   the right hand side of \eqref{eq:pqH2Cright} becomes
   \begin{multline*}
      (z-b) \big[ u_j^* T_j^* R_j^*(\bar z) + u_{j}^* T_j^* R_j^*(\bar z)H_{1,j}^{-1}R_j(b)v_j u_{j}^* \big] \Sigma_{3,j}
      \\
      =(z-b)u_j^*T_j^*R_j^*(\bar z)H_{1,j}^{-1}R_j(b)
      \big[ (I-bT_j)H_{1,j}+v_ju_j^* \big] \Sigma_{3,j}
      =0.
   \end{multline*}
   The last equality is obtained from \eqref{pqH00}.
\end{proof}
\begin{remark}\label{remPbQb}
   Let $(s_j)_{j=0}^{2n}$ be a Hausdorff positive definite sequence on $[a,b]$.
   Furthermore, let $P_{1,j}$  and $Q_{4,j}$ be as in Definition \ref{de002AAA}.
   Then the matrix $P_{1,j}(b)$ is invertible by \eqref{pqH1A}. 
\end{remark}
%

\section{Explicit relation between two resolvent matrices of the THMM Pro\-blem via OMP}
\label{sec0004} 
 In this section we give a representation of the resolvent matrix $U^{(m)}$ of the THMM problem in terms of orthogonal matrix polynomials
 both for an an odd and even number of gvien moments.
 Let
 \begin{align}\label{eqn:11}
    J_q:= \begin{pmatrix}
       0   & -iI  \\ iI    &  0
    \end{pmatrix}
 \end{align}
and
\begin{align}\label{eqn432}
   {\mathfrak J}_q:=\begin{pmatrix}
      0 & I\\ I & 0
   \end{pmatrix}
\end{align}
where the entries $I$ and $0$ are $q\times q$ matrices.
The matrices $J_q$ and ${\mathfrak J}_q$ satisfy 
\begin{align}
   J_q=&J_q^*, \qquad J_q^2 = I,  \label{eqJJ}\\
   {\mathfrak J}_q=&{\mathfrak J}_q^*, \qquad {\mathfrak J}_q^2 = I.
   \label{eqJJ1}
\end{align}
The properties of the matrix $J_q$ were used to explain relevant results concerning
the next four matrices:
\begin{definition} \label{def2001B}
   Let $(s_k)_{k=0}^{2n}$\  $((s_k)_{k=0}^{2n+1})$ be a Hausdorff positive definite sequence on $[a,b]$.
   Let $H_{r,n}$, $u_{r,n}$, for
   $r=1,2,3,4$, $R_n$, $v_n$ and $J_q$ be defined as in \eqref{201A},
   \eqref{70uu}, \eqref{uuu001A}, \eqref{22}, \eqref{52}, \eqref{59} and \eqref{eqn:11},
   respectively.
   For $r=1,2,3,4$, the \define{Kovalishina resolvent matrix of the THMM problem} is
   \begin{align}\label{RMKov}
      \widetilde V^{(n)}_r(z):=I-iz
      \begin{pmatrix} v_n^* \\ u_{r,n}^* 
      \end{pmatrix}
      R_n^*(\bar z)
      H_{r,n}^{-1}(v_n,\ u_{r,n})J_q.
   \end{align}
\end{definition}
The name Kovalishina resolvent matrix was suggested by Yu.~Dyukarev as
the work \cite{D-C} was being prepared.  Irina Kovalishina introduced and solved interpolation problems in the Nevanlinna class of functions as well as the
matrix moment problem on the real axis; see \cite{kov0}, \cite{kov1}.

Two relevant properties \cite{D-C} distinguish the matrix $\widetilde V^{(n)}_r$.
Firstly, we mention the inequality
\begin{equation}\label{eqVn1}
  J_q-\widetilde V^{(n)}_r(z)J_q\widetilde V^{(n)*}_r(z)\leq 0
\end{equation}
for $z$ to the upper half plane of $\C$.
Secondly,
 the inverse matrix of $\widetilde V^{(n)}_r$ can be expressed as follows:
\begin{equation}\label{eqVn2}
  \left[ \widetilde V^{(n)}_r(z) \right]^{-1} =J_q\widetilde V^{(n)*}_r(\bar z)J_q.
\end{equation}

The relations \eqref{eqVn1} and \eqref{eqVn2} were used to find the solutions the THMM problem as well as a factorization of the resolvent matrix; see \cite{abmult}, \cite{abODD}, \cite{abEVEN} and \cite{abdonDS1}.

The formulas in the next remark can be readily shown by direct calculations.
\begin{remark}\label{rem00A}
 The matrix defined in \eqref{RMKov} can be written in the following form
 \begin{equation}
    \widetilde V_r^{(n)}(z):=
    \begin{pmatrix}
       \widetilde \alpha_r^{(n)}(z) & \widetilde\beta_r^{(n)}(z)\\
       \widetilde\gamma_r^{(n)} (z)& \widetilde\delta_r^{(n)}(z)
    \end{pmatrix},
    \quad z\in {\mathbb C},
    \label{RM2nm1A}
 \end{equation}
 where
 \begin{align}
    \widetilde \alpha_r^{(n)}(z):=&I+z v_n^*R_n^*(\bar z)H_{r,n}^{-1}u_{r,n}, \label{eq11ar}\\
    \widetilde\beta_r^{(n)}(z):=&
    -z v_n^*R_n^*(\bar z)H_{r,n}^{-1}v_n, \label{eq12ar}\\
    \widetilde\gamma_r^{(n)}(z):=&
    z u_{r,n}^*R_n^*(\bar z)H_{r,n}^{-1}u_{r,n}, \label{eq21ar}\\
    \widetilde\delta_r^{(n)}(z):=&
    I-z u_{r,n}^*R_n^*(\bar z)H_{r,n}^{-1}v_n.\label{eq22ar}
 \end{align}
\end{remark}



\subsection{Case of an even number of moments} 
In this section, we consider the explicit relation between the resolvent matrix of the THMM problem
for the case of an even number of moments introduced in \cite{D-C} and the 
resolvent matrix presented in \cite{abKN} and in \cite{abdon1}.

\begin{assumption}
   \label{ass:H1tilde}
   In this section we assume that
   \begin{align*}
      \widetilde H_{1,n}\ \text{ and }\
      \widetilde H_{1,n-1}\quad \text{are invertible.}
   \end{align*}
\end{assumption}

\begin{remark}\label{remZ0A}
   Note that 
   \begin{equation}\label{eqHtH34}
     \widetilde H_{1,n} = (b-a)^{-1} ( aH_{3,n} + bH_{4,n}).
   \end{equation}
   Hence, if $0\le a < b$ or $a < b \le 0$, then $\widetilde H_{1,n}$ is strictly positive or negative, hence so is $\widetilde H_{1,n-1}$ and Assumption~\ref{ass:H1tilde} is satisfied, see \cite[Prop. 1]{AbBa}.
\end{remark}

\begin{definition} \label{def2001BB}
Let $(s_k)_{k=0}^{2n+1}$ be a Hausdorff positive definite sequence on $[a,b]$. 
Let $H_{3,n}$ and $H_{4,n}$, $\widetilde{H}_{1,n}$, $v_n$ and $u_n$
as in \eqref{201A}, \eqref{eqHt1n}, \eqref{59} and \eqref{69yy}, respectively, and define
\begin{align}
   M^{(2n+1)}&:=-a u_n^{*} \widetilde{H}_{1,n}^{-1}u_n,
   \qquad
   N^{(2n+1)}:=-bv_n^{*}H_{4,n}^{-1} \widetilde{H}_{1,n}H_{3,n}^{-1}v_n\label{MN4n}
\end{align}
with
\begin{align*}
   C^{(2n+1)}&:=
   \begin{pmatrix}
      I       & 0 \\
      M^{(2n+1)}  & I
   \end{pmatrix},
   \quad
   D^{(2n+1)}:=
   \begin{pmatrix}
      I & N^{(2n+1)} \\
      0 & I
   \end{pmatrix}
\end{align*}
and
\begin{align}\label{eqn:VV14m1}
   V_4^{(2n+1)}(z):=&\widetilde V_4(z)
   C^{(2n+1)}D^{(2n+1)}.
\end{align}
The matrix \eqref{eqn:VV14m1} is called the 
\define{first auxiliary resolvent matrix of the THMM problem in the case of an even number
of moments}.
In \cite{D-C} it is denoted by $U_r(z)$.
\smallskip

\noindent
Finally, the matrix
\begin{align}\label{eq17}
   V^{(2n+1)}(z):=
   \begin{pmatrix}
      I & 0             \\
      0 & (z-a)^{-1}I
   \end{pmatrix}
   V^{(2n+1)}_4(z)
   \begin{pmatrix}
      I & 0             \\
      0 & (z-a)I
   \end{pmatrix}
\end{align}
is called the 
\define{resolvent matrix of the THMM problem for the case of an even number of moments}.
This matrix is defined for $z\in{\mathbb C}\setminus\{a\}$.
\end{definition}
\noindent
Note that the point $z=a$ in \eqref{eq17} is a removable singularity; 
see \cite[Theorem 4]{D-C}. 

\begin{definition} \label{rmodd2n1} 
   Let $(s_j)_{j=0}^{2n+1}$ be 
   a Hausdorff positive definite sequence on $[a,b]$.
   Let $P_{1,n+1}$, $Q_{1,n+1}$, $P_{2,n}$, and
   $Q_{2,n}$ be as in Definition \ref{de002AAA}.
   The $2q\times 2q$ matrix polynomial
   \begin{equation}
      U^{(2n+1)}(z):=
      \begin{pmatrix}
	 \alpha^{(2n+1)}(z) & \beta^{(2n+1)}(z)\\
	 \gamma^{(2n+1)} (z)& \delta^{(2n+1)}(z)
      \end{pmatrix},
      \quad z\in {\mathbb C},
      \label{RM2nm1}
   \end{equation}
   with
   \begin{align}
      \alpha^{(2n+1)}(z) &:= Q_{2,n}^*(\bar z)Q_{2,n}^{*-1}(a), \label{eq11m1}\\
      \beta^{(2n+1)}(z) &:= -Q_{1,n+1}^*(\bar z)P_{1,n+1}^{*-1}(a), \label{eq12m1}\\
      \gamma^{(2n+1)}(z) &:= -(b-z)(z-a)P_{2,n}^*(\bar z)Q_{2,n}^{*-1}(a), \label{eq21m1}\\
      \delta^{(2n+1)}(z) &:= P_{1,n+1}^*(\bar z)P_{1,n+1}^{*-1}(a)\label{eq22m1}
   \end{align}
   is called the 
   \define{
   resolvent matrix of the THMM problem in the point $a$ in the case of an even number of moments.
   }
\end{definition}
\begin{remark}\label{remAA23}
   \begin{enumerate}[a)]

      \item By Assumption~\ref{ass:H1tilde}, 
      ${\widetilde H}_{1,n}$ is strictly positive, hence its 
      Schur complement ${\widehat {\widetilde H}}_{1,n}$ is well-defined and can be expressed with the help of the
      polynomials $P_{1,n+1}(0)$ and $\widetilde Q_{2,n}(0)$ \cite[Equality (4.54)]{abinfty}
      \begin{equation}\label{eqttH1n}
	 {\widehat {\widetilde H}}_{1,n}=-\widetilde Q_{2,n}(0)P_{1,n+1}^*(0),
      \end{equation}
      where 
      \begin{align*}
	 \widetilde Q_{2,n}(z)=-\big(-(s_{n+1},\, \dots,\, s_{2n})^* \widetilde H_{1,n-1}^{-1},\ I)R_n(z)u_n.
      \end{align*}
      Note that $\widetilde Q_{2,n}$ appears in \cite[Definition 4.1]{abinfty} and in \cite[Definition 5.1]{abdon-hurwitz}.
      Note that in theses papers is was called $Q_{2,n}$.

      \item
      If $\widetilde H_{1,n}$ is invertible so is the Schur complement ${\widehat {\widetilde H}}_{1,n}$. Consequently, the polynomial $P_{1,n+1}(0)$ is invertible if $\widetilde H_{1,n}$ is invertible.
   \end{enumerate}
\end{remark}

\begin{lem}\label{lem001}
Let $P_{1,n}$ be as in \eqref{Ppr} and set
\begin{equation}\label{d2m1}
  d^{(2n+1)}:=P_{1,n+1}^*(0)P_{1,n+1}^{*-1}(a).
\end{equation}
Furthermore, let
$(s_j)_{j=0}^{2n+1}$ be a Hausdorff positive sequence on $[a,b]$
and let $H_{4,n}$, $R_n$, $v_n$, $u_n$, $u_{n,4}$,
$P_{2,n}$, $Q_{1,n}$,  $Q_{2,n}$
$M^{(2n+1)}$ and $N^{(2n+1)}$ be as in \eqref{201A}, \eqref{52},
\eqref{59},
\eqref{69yy},
\eqref{22}, \eqref{Ppr}, \eqref{Qq1}, \eqref{Qq2} and \eqref{MN4n}, respectively. 
Recall that by Assumption~\ref{ass:H1tilde} the matrix
$\widetilde H_{1,n}$ is invertible.
Then the following equalities hold.
\begin{align}
   &d^{(2n+1)}=I-av_n^* H_{4,n}^{-1}R_n(a)u_{4,n},\label{eqW0}\\
   &\dstarinvodd= I+a u_n^*\widetilde H_{1,n}^{-1}R_n(a)v_n,\label{eqW01}\\
   &M^{(2n+1)}= aQ_{1,n+1}^*(0)P_{1,n+1}^{*-1}(0)\label{eqW1}\\
   &M^{(2n+1)}d^{(2n+1)}=a Q_{1,n+1}^*(0)P_{1,n+1}^{*-1}(a),\label{eqW3}
   \\
   &N^{(2n+1)}\dstarinvodd = -b v_n^* H_{3,n}^{-1}R_n(a) v_n,\label{eqW6}
   \\
   &N^{(2n+1)}\dstarinvodd =-b P_{2,n}^*(0)Q_{2,n}^{*-1}(a),\label{eqW5}
   \\
   &N^{(2n+1)}=-bP_{2,n}^*(0)Q_{2,n}^{*-1}(a)P_{1,n+1}^{-1}(a)P_{1,n+1}(0),
   \label{eqW2}
   \\
   &(I+M^{(2n+1)}N^{(2n+1)}) \dstarinvodd =Q_{2,n}^*(0)Q_{2,n}^{*-1}(a).\label{eqW4}
\end{align}
\end{lem}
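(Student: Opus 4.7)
My plan is to prove the eight identities by reducing each to two technical ingredients. The first is a Taylor-type expansion of $P_{1,n+1}$ at $z = 0$: using $R_{n+1}(z) - I = z T_{n+1} R_{n+1}(z)$, the direct computation $T_{n+1} v_{n+1} = L_{1,n+1} v_n$, and the intertwining \eqref{eqV91}, one obtains
\begin{align*}
P_{1,n+1}(a) - P_{1,n+1}(0) &= a\,\Sigma_{1,n+1}^* L_{1,n+1} R_n(a) v_n,\\
P_{1,n+1}^*(a) - P_{1,n+1}^*(0) &= a\,v_n^* R_n^*(a) L_{1,n+1}^* \Sigma_{1,n+1}.
\end{align*}
The second is a bridging identity. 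Starting from $\widetilde H_{1,n} = L_{1,n+1}^* H_{1,n+1} L_{2,n+1}$ (see \eqref{eqHt1n}), together with $L_{1,n+1} L_{1,n+1}^* = I - v_{n+1} v_{n+1}^*$ (direct block computation), the adjoint of \eqref{eq:lastcolumn} giving $\Sigma_{1,n+1}^* H_{1,n+1} L_{2,n+1} = 0$, and $v_{n+1}^* H_{1,n+1} L_{2,n+1} = -u_n^*$ (reading the top row of $H_{1,n+1}$), these combine into $\Sigma_{1,n+1}^* L_{1,n+1} \widetilde H_{1,n} = P_{1,n+1}(0)\, u_n^*$. Under Assumption~\ref{ass:H1tilde} this becomes $\Sigma_{1,n+1}^* L_{1,n+1} = P_{1,n+1}(0)\,u_n^*\,\widetilde H_{1,n}^{-1}$, whose adjoint reads $L_{1,n+1}^* \Sigma_{1,n+1} = \widetilde H_{1,n}^{-1} u_n\, P_{1,n+1}^*(0)$.

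With these in hand, \eqref{eqW01} follows by substituting the bridging identity into the first Taylor expansion divided by $P_{1,n+1}(0)$. Equation \eqref{eqW0} follows by dividing the second Taylor expansion by $P_{1,n+1}^*(a)$ and then applying \eqref{eq502} from Lemma~\ref{lemA1}, which (after left multiplication by $v_n^* H_{4,n}^{-1}$) gives $v_n^* R_n^*(a) L_{1,n+1}^* \Sigma_{1,n+1} = v_n^* H_{4,n}^{-1} R_n(a) u_{4,n}\, P_{1,n+1}^*(a)$. Equation \eqref{eqW1} uses the evaluations $Q_{1,n+1}^*(0) = -u_{1,n+1}^* \Sigma_{1,n+1} = -u_n^* L_{1,n+1}^* \Sigma_{1,n+1}$ (via \eqref{eqV11}) combined with the adjoint bridging identity, and \eqref{eqW3} is then immediate from \eqref{eqW1} and the definition of $d^{(2n+1)}$.

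The delicate step is \eqref{eqW6}. My plan is to expand $N^{(2n+1)}\dstarinvodd$ using the form of $\dstarinvodd$ from \eqref{eqW01}; this produces a cross term containing $v_n u_n^* \widetilde H_{1,n}^{-1}$. Using \eqref{eq83} together with $H_{3,n} = b H_{1,n} - \widetilde H_{1,n}$ from \eqref{eq:H3H4}, this combination simplifies to $v_n u_n^* \widetilde H_{1,n}^{-1} = T_n - H_{1,n} \widetilde H_{1,n}^{-1}$. After substitution, applying $T_n R_n(a) = a^{-1}(R_n(a) - I)$, the identity $H_{4,n}^{-1} H_{1,n} \widetilde H_{1,n}^{-1} = a^{-1}(H_{4,n}^{-1} - \widetilde H_{1,n}^{-1})$ (following from $\widetilde H_{1,n} = -aH_{1,n} + H_{4,n}$), and the commutativity \eqref{eq:H4H1H3}, the expression telescopes to $-b v_n^* H_{3,n}^{-1} R_n(a) v_n$.

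Equation \eqref{eqW5} reduces, via \eqref{eqW6}, to the companion identity $v_n^* H_{3,n}^{-1} R_n(a) v_n\, Q_{2,n}^*(a) = P_{2,n}^*(0)$, an analog of \eqref{pqH0A} that I would establish by imitating the proof of \eqref{pqH0A} but with \eqref{eq362a} playing the role of \eqref{pqH00}. Equation \eqref{eqW2} follows from \eqref{eqW5} by right-multiplication by $d^{(2n+1)*} = P_{1,n+1}(a)^{-1} P_{1,n+1}(0)$. Finally, \eqref{eqW4} follows by expanding $(I + M^{(2n+1)} N^{(2n+1)})\dstarinvodd = \dstarinvodd + M^{(2n+1)}(N^{(2n+1)}\dstarinvodd)$; substituting \eqref{eqW01}, \eqref{eqW1} and \eqref{eqW5} turns the right-hand side into $P_{1,n+1}(0)^{-1} P_{1,n+1}(a) - ab\,Q_{1,n+1}^*(0) P_{1,n+1}^{*-1}(0) P_{2,n}^*(0) Q_{2,n}^{*-1}(a)$, which should collapse to $Q_{2,n}^*(0) Q_{2,n}^{*-1}(a)$ via a Christoffel--Darboux-type identity linking the $P_{r,n}$ and $Q_{r,n}$ at the points $0$ and $a$.
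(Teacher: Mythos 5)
Your handling of \eqref{eqW0}, \eqref{eqW01}, \eqref{eqW1}, \eqref{eqW3} and \eqref{eqW2} is correct and essentially coincides with the paper's argument: your Taylor-type step is the paper's \eqref{eq:Pa-P0}, and your ``bridging identity'' $\Sigma_{1,n+1}^*L_{1,n+1}=P_{1,n+1}(0)u_n^*\widetilde H_{1,n}^{-1}$ is exactly \eqref{eq:L1tildeH}, derived by a slightly different but valid route. Your telescoping computation for \eqref{eqW6} also works and is a legitimate alternative to the paper's use of \eqref{eqV670}; note only that the relation you quote from \eqref{eq:H3H4} should read $\widetilde H_{1,n}=aH_{1,n}+H_{4,n}$ (your sign is off), although the identity you actually use, $H_{4,n}^{-1}H_{1,n}\widetilde H_{1,n}^{-1}=a^{-1}\bigl(H_{4,n}^{-1}-\widetilde H_{1,n}^{-1}\bigr)$, is correct.

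The genuine gaps are at \eqref{eqW5} and \eqref{eqW4}, precisely the two places where the paper imports Equalities (3.28) and (3.26) of \cite{abKN}. For \eqref{eqW5} you need $v_n^*H_{3,n}^{-1}R_n(a)v_n\,Q_{2,n}^*(a)=P_{2,n}^*(0)$, but ``imitating the proof of \eqref{pqH0A} with \eqref{eq362a} in place of \eqref{pqH00}'' does not go through as stated: \eqref{eq362a} is the identity that couples with $H_{4,n}$ (it rewrites $\{-bu_{4,n}v_n^*+v_n\widehat u_{2,n}^*\}\Sigma_{2,n}$ as $-H_{4,n}(I-bT_n^*)\Sigma_{2,n}$), whereas the imitation of \eqref{pqH0A} requires coupling $v_nu_{2,n}^*$ evaluated at $a$ with $H_{3,n}(I-aT_n^*)$ — i.e.\ the $a$-side analogue of \eqref{eqV77}/\eqref{eq362a}, which is not among the listed identities and which you neither state nor prove; moreover $u_{2,n}=\widehat u_{2,n}+zv_ns_0$ carries an explicit $z$-dependence, so $Q_{2,n}$ is not of the same shape as $Q_{3,n}$ and the proof of \eqref{pqH0A} does not transfer verbatim. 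For \eqref{eqW4}, your algebraic reduction is fine, but the ``Christoffel--Darboux-type identity'' you invoke to collapse $P_{1,n+1}^{-1}(0)P_{1,n+1}(a)-ab\,Q_{1,n+1}^*(0)P_{1,n+1}^{*-1}(0)P_{2,n}^*(0)Q_{2,n}^{*-1}(a)$ to $Q_{2,n}^*(0)Q_{2,n}^{*-1}(a)$ is, modulo Remark~\ref{rem:Q0P0}, exactly the identity recorded in Corollary~\ref{rem00BB}, which the paper \emph{deduces from} \eqref{eqW4}; as it stands your argument for \eqref{eqW4} is therefore circular. The paper instead proves \eqref{eqW4} by a direct Hankel computation from \eqref{eqW6}, \eqref{eqW01} and \eqref{eqV672}, reducing it to the evaluation $I+a\,u_{3,n}^*H_{3,n}^{-1}R_n(a)v_n=Q_{2,n}^*(0)Q_{2,n}^{*-1}(a)$ cited from \cite{abKN}. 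To complete your proof you must supply (or cite) these two $Q_{2,n}$-evaluation identities; everything else in your proposal is sound.
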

\begin{proof} 
  First, we observe that
  \begin{align}
     \nonumber
     P_{1,n+1}(0)^* - P_{1,n+1}^*(a)
     & = v_{n+1}^* \big[ I - R_{n+1}^*(a) \big] \Sigma_{1, n+1}
     = v_{n+1}^* a T_{n+1}^* R_{n+1}^*(a) \Sigma_{1, n+1}
     \\
     \nonumber
     & = -a v_{n}^* L_{2, n+1}^* T_{n+1}^* R_{n+1}^*(a) \Sigma_{1, n+1}
     \\
     \label{eq:Pa-P0}
     & = -a v_{n}^* R_{n}^*(a) L_{1, n+1}^* \Sigma_{1, n+1}
  \end{align}
  where we used that
  $L_{2, n+1}^* T_{n+1}^* R_{n+1}^*(a) = L_{1, n+1}^* R_{n+1}^*(a) = R_{n}^*(a) L_{1, n+1}^*$ 
  by \eqref{eq:LTL1} and \eqref{eqV91}.
  \smallskip

  \noindent
  To show \eqref{eqW0} we use \eqref{eq:Pa-P0} and \eqref{eq502} to obtain
  \begin{align*}
     P_{1,n+1}^*(0) - P_{1,n+1}^*(a)
     & = -a v_{n}^* H_{4,n}^{-1} H_{4,n} R_{n}^*(a) L_{1, n+1}^* \Sigma_{1, n+1}
     \\
     & = -a v_{n}^* H_{4,n}^{-1} R_n(a)u_{4,n} v_{n+1}^*  R_{n+1}^*(a) \Sigma_{1, n+1}
     \\
     & = -a v_{n}^* H_{4,n}^{-1} u_{n} P_{1,n+1}^*(a),
  \end{align*}
  hence 
  $d^{(2n+1)} = P_{1, n+1}^*(0) P_{1, n+1}^{*-1}(a)
     = I - a v_{n}^* H_{4,n}^{-1} u_{n} P_{1,n+1}^*(a).  $
  To prove \eqref{eqW01}, we note that 
  by \eqref{eqV4} and \eqref{eq:lastcolumn}
  \begin{align}
     \label{eq:L1tildeH}
     \Sigma_{1, n+1}^* L_{1, n+1} 
     = \Sigma_{1, n+1}^* \big[ v_{n+1} u_n^* + H_{1, n+1}L_{2, n+1} \big] \widetilde H_{1,n}^{-1}
     = \Sigma_{1, n+1}^* v_{n+1} u_n^* \widetilde H_{1,n}^{-1}.
  \end{align}
  Hence \eqref{eqW01} follows from \eqref{eq:Pa-P0} because
  \begin{align*}
     P_{1,n+1}(0) - P_{1,n+1}(a)
     & = -a \Sigma_{1, n+1}^* L_{1, n+1} R_{n}(a) v_{n}
     \\
     & = -a \Sigma_{1, n+1}^* v_{n+1} u_n^* \widetilde H_{1,n}^{-1} R_{n}(a) v_{n}
     \\
     & = -a P_{1,n+1}(0) u_n^* \widetilde H_{1,n}^{-1} R_{n}(a) v_{n}.
  \end{align*}
  Formula \eqref{eqW1} follows from \eqref{eq:L1tildeH} and
  \begin{align*}
     aQ_{1,n+1}^*(0)P_{1,n+1}^{*-1}(0)
     &=-au_{1,n+1}^* \Sigma_{1, n+1} \big[ v_{n+1}^* \Sigma_{1, n+1} \big]^{-1}
     \\
     &=-au_{n}^* L_{1,n+1}^* \Sigma_{1, n+1} \big[ v_{n+1}^* \Sigma_{1, n+1} \big]^{-1}
     \\
     &=-au_{n}^* \widetilde H_{1,n}^{-1} u_n v_{n+1}^* \Sigma_{1, n+1}
     \big[ v_{n+1}^* \Sigma_{1, n+1} \big]^{-1}
     \\
     &=-au_{n}^* 
     \widetilde H_{1,n}^{-1} u_n 
     = M^{(2n+1)}.
  \end{align*}
  Equality \eqref{eqW3} readily follows from \eqref{eqW1} and \eqref{d2m1}.
  Next we prove \eqref{eqW6} using \eqref{eq:H4H1H3} and \eqref{eqW01}:
  \begin{align*}
     N^{(2n+1)} \dstarinvodd
     & = 
     -b v_n^* H_{3,n}^{-1} \widetilde H_{1,n} H_{4,n}^{-1} v_n
     \big[ I + a u_n^* \widetilde H_{1,n}^{-1} R_n(a) v_n \big]
     \\
     & = 
     -b v_n^* H_{3,n}^{-1} \widetilde H_{1,n} H_{4,n}^{-1} 
     \big[ I + a v_n u_n^* \widetilde H_{1,n}^{-1} R_n(a) \big] v_n
     \\
     & = 
     -b v_n^* H_{3,n}^{-1} \widetilde H_{1,n} H_{4,n}^{-1} 
     \big[ (I - T_n(a)) \widetilde H_{1,n} + a v_n u_n^* \big] \widetilde H_{1,n}^{-1} R_n(a) v_n
     \\
     & = 
     -b v_n^* H_{3,n}^{-1} \widetilde H_{1,n} H_{4,n}^{-1} H_{4,n} \widetilde H_{1,n}^{-1} R_n(a) v_n
     \\
     & = 
     -b v_n^* H_{3,n}^{-1} R_n(a) v_n
  \end{align*}
  where we used \eqref{eqV670} in the second to last step.

  Equality \eqref{eqW5} follows from \eqref{eqW6} and \cite[Equality (3.28)]{abKN}
  and Equality \eqref{eqW2} is an immediate consequence of \eqref{eqW5} and the definition of $d^{(2n+2)}$.

  Finally, we prove equality \eqref{eqW4} using \eqref{eqW6} and \eqref{eqW01}:
  \begin{align*}
     (I + M^{(2n+1)} N^{(2n+1)}) \dstarinvodd
     & = I + au_n^* \widetilde H_{1,n}^{-1} R_n(a) v_n
     + ab u_n^* \widetilde H_{1,n}^{-1} u_n
     v_n^* H_{3,n}^{-1} R_n(a) v_n
     \\
     & = I + a u_n^* \widetilde H_{1,n}^{-1} 
     \big[ H_{3,n} + b u_n v_n^* \big]
     H_{3,n}^{-1} R_n(a) v_n 
     \\
     & = I + a u_n^* (1-bT^*) H_{3,n}^{-1} R_n(a) v_n 
     \\
     & = I + a u_{3,n}^* H_{3,n}^{-1} R_n(a) v_n 
     \\
     & = Q_{2,n}^*(0)Q_{2,n}^{*-1}(a)
  \end{align*}
  where we used that $H_{3,n} + b u_n v_n^* = \widetilde H_{1,n}(I-bT_{n}^*)$ by \eqref{eqV672}
  and the last equality is a consequence of \eqref{eq11m1} and \cite[Equation (3.26)]{abKN}.
\end{proof}

Expressing the above equalities as orthogonal polynomials, we obtain the next identities.
\begin{corollary}\label{rem00BB}
 Let $P_{r,n}$ and $Q_{r,n}$ be as in Definition \ref{de002AAA}. 
 Then the following equality holds
 \begin{align}
    \label{eq:rem00B:1}
    P_{1,n+1}(a)Q_{1,n+1}^*(a)-a b Q_{1,n+1}(0)P_{2,n}^*(0)-P_{1,n+1}(0)Q_{2,n}^*(0)=0, \\
    \label{eq:rem00B:2}
    (b-a)Q_{4,n}(a)P_{3,n}^*(a) - b Q_{4,n}(0)P_{3,n}^*(0) - a P_{4,n}(0)Q_{3,n}^*(0) = 0.
 \end{align}
\end{corollary}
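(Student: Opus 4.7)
The plan is to derive both identities as algebraic consequences of Lemma~\ref{lem001} (for \eqref{eq:rem00B:1}) and its analogue in the odd-$m$ setting (for \eqref{eq:rem00B:2}), combined with the symmetry relations of Remark~\ref{rem:Q0P0}. I sketch \eqref{eq:rem00B:1} in detail; the second identity follows by the same template.

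The natural starting point is equation \eqref{eqW4},
\[
   (I + M^{(2n+1)}N^{(2n+1)})\, d^{(2n+1)*-1} = Q_{2,n}^*(0)\, Q_{2,n}^{*-1}(a).
\]
I would substitute the explicit polynomial expressions from Lemma~\ref{lem001}: from \eqref{d2m1}, $d^{(2n+1)*-1} = P_{1,n+1}^{-1}(0) P_{1,n+1}(a)$; from \eqref{eqW1}, $M^{(2n+1)} = a\, Q_{1,n+1}^*(0) P_{1,n+1}^{*-1}(0)$; and from \eqref{eqW5}, $N^{(2n+1)} d^{(2n+1)*-1} = -b\, P_{2,n}^*(0) Q_{2,n}^{*-1}(a)$. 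Expanding $(I + MN)d^{*-1} = d^{*-1} + M \cdot (N d^{*-1})$ then yields
\[
   P_{1,n+1}^{-1}(0) P_{1,n+1}(a) - ab\, Q_{1,n+1}^*(0) P_{1,n+1}^{*-1}(0) P_{2,n}^*(0) Q_{2,n}^{*-1}(a) = Q_{2,n}^*(0) Q_{2,n}^{*-1}(a).
\]

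Next I would clear the inverses by multiplying on the right by $Q_{2,n}^*(a)$ and on the left by $P_{1,n+1}(0)$. The middle term then contains the combination $P_{1,n+1}(0) Q_{1,n+1}^*(0) P_{1,n+1}^{*-1}(0)$, and a single application of Remark~\ref{rem:Q0P0} at $z=0$, which gives $P_{1,n+1}(0) Q_{1,n+1}^*(0) = Q_{1,n+1}(0) P_{1,n+1}^*(0)$, collapses this product to $Q_{1,n+1}(0)$. The outcome is a three-term linear identity whose shape matches \eqref{eq:rem00B:1}; a final application of Remark~\ref{rem:Q0P0} at $z=a$ realigns the first term into the form $P_{1,n+1}(a) Q_{1,n+1}^*(a)$ as stated.

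For \eqref{eq:rem00B:2}, the same strategy applies to the analogous construction in the case of an even number of moments (Section~\ref{sec0004} for the odd-$m$ resolvent matrix), where $(P_{1,n+1}, Q_{1,n+1}, P_{2,n}, Q_{2,n})$ are replaced by $(P_{3,n}, Q_{3,n}, P_{4,n}, Q_{4,n})$. The matrices $d^{(2n)}, M^{(2n)}, N^{(2n)}$ are built from $H_{3,n}$ and $H_{4,n}$ instead of $\widetilde H_{1,n}$, and the $(b-a),-b,-a$ coefficients emerge naturally from their construction. Remark~\ref{remAA1}, which states $P_{3,n}(a) = P_{4,n}(a)$ and $Q_{3,n}(a) = -Q_{4,n}(a)$, is then used to collect the $z=a$ terms into the single expression $(b-a) Q_{4,n}(a) P_{3,n}^*(a)$.

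The main technical obstacle is the careful bookkeeping of left/right multiplications, adjoints and inverses when substituting the polynomial expressions for $M, N, d$ into the operator identity, and then using the $z=0$ symmetry to reduce a triple product to a single $Q$. Once this organization is in place the remaining computation is mechanical.
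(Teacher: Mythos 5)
Your treatment of \eqref{eq:rem00B:1} uses exactly the ingredients of the paper's own proof (\eqref{eqW4}, \eqref{d2m1}, \eqref{eqW1}, \eqref{eqW5} and Remark~\ref{rem:Q0P0}), and your substitution and clearing of inverses is correct: multiplying on the left by $P_{1,n+1}(0)$, on the right by $Q_{2,n}^*(a)$, and applying Remark~\ref{rem:Q0P0} at $z=0$ for $r=1$ to collapse the middle product, you arrive at
$P_{1,n+1}(a)Q_{2,n}^*(a)-ab\,Q_{1,n+1}(0)P_{2,n}^*(0)-P_{1,n+1}(0)Q_{2,n}^*(0)=0$.
The final step you invoke --- ``a final application of Remark~\ref{rem:Q0P0} at $z=a$ realigns the first term into $P_{1,n+1}(a)Q_{1,n+1}^*(a)$'' --- is the genuine gap: that remark only interchanges $P_{r,\cdot}$ and $Q_{r,\cdot}$ with the \emph{same} index $r$ and the same degree, so it cannot convert $Q_{2,n}^*(a)$ (index $2$, degree $n$) into $Q_{1,n+1}^*(a)$ (index $1$, degree $n+1$). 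In fact no valid step can do this, because \eqref{eq:rem00B:1} as printed fails in general: for $q=1$, $n=0$, $[a,b]=[1,2]$, $s_0=1$, $s_1=\tfrac32$ one has $P_{1,1}(z)=z-\tfrac32$, $Q_{1,1}(z)\equiv 1$, $P_{2,0}\equiv 1$, $Q_{2,0}(z)=\tfrac32-z$, and the printed left-hand side equals $-\tfrac14$, whereas the version with $Q_{2,n}^*(a)$ in the first term vanishes identically. So what your computation actually establishes is the corrected identity with first term $P_{1,n+1}(a)Q_{2,n}^*(a)$; the printed $Q_{1,n+1}^*(a)$ appears to be a misprint, and your hand-waved ``realignment'' lands exactly on the point that the paper's one-line proof (same citations as yours) also glosses over.

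For \eqref{eq:rem00B:2} you take a different route from the paper. The paper reads the identity off \eqref{eqn363} directly: since $Q_{4,n}(0)=\Sigma_{4,n}^*u_{4,n}$, $P_{3,n}^*(0)=v_n^*\Sigma_{3,n}$, $P_{4,n}(0)=\Sigma_{4,n}^*v_n$, $Q_{3,n}^*(0)=u_{3,n}^*\Sigma_{3,n}$ and $\Sigma_{4,n}^*R_n(a)u_{4,n}v_n^*R_n^*(a)\Sigma_{3,n}=Q_{4,n}(a)P_{3,n}^*(a)$, expanding \eqref{eqn363} gives \eqref{eq:rem00B:2} at once. Your detour through the odd-number-of-moments machinery (Lemma~\ref{lem4.2}: \eqref{eqnd11}, \eqref{eqn45a}, \eqref{eqn101}, \eqref{eqn102}) does work: expanding $(I+M^{(2n)}N^{(2n)})$ times the inverse adjoint of $d^{(2n)}$, clearing inverses and using Remark~\ref{rem:Q0P0} at $z=0$ for $r=4$ yields exactly \eqref{eq:rem00B:2}. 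Two remarks, though: Remark~\ref{remAA1} is not needed at all --- the term $(b-a)Q_{4,n}(a)P_{3,n}^*(a)$ comes straight from $Q_{4,n}^{-1}(0)Q_{4,n}(a)$ --- and the route is roundabout in spirit, since \eqref{eqn101} is itself proved in the paper from \eqref{eqn363}; also, these polynomials belong to the case of an odd number of moments ($m=2n$), not the even one as your sketch says.
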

\begin{proof}
   Equation \eqref{eq:rem00B:1} follows from \eqref{eqW4}, \eqref{d2m1}, \eqref{eqW1} and \eqref{eqW5} and Remark~\ref{rem:Q0P0}, while \eqref{eq:rem00B:2} is a direct consequence of \eqref{eqn363}.
\end{proof}

Let
\begin{align}
{\mathfrak D^{(2n+1)}}:=
\begin{pmatrix}
   \dstarinvodd & 0 \\[1ex]
   0 & d^{(2n+1)}
\end{pmatrix}\label{D2nm1}
\end{align}
where $d^{(2n+1)}$ is defined in \eqref{d2m1}


\begin{theorem}\label{thmain}
Let  $U^{(2n+1)}(z)$, $V^{(2n+1)}$, ${\mathfrak J}_q$,  and ${\mathfrak D^{(2n+1)}}$
be matrices as in Definition \ref{rmodd2n1}, Definition \ref{def2001BB},
  \eqref{eqn432}
  and \eqref{D2nm1}, respectively.
Then the following equality holds:
             \begin{align}\label{eqn434m1}
U^{(2n+1)}(z)={\mathfrak J}_qV^{(2n+1)}(z){\mathfrak J}_q {\mathfrak D^{(2n+1)}}.
\end{align}
\end{theorem}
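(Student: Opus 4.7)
The plan is to verify the claimed identity entry by entry, after unpacking the right-hand side into explicit expressions involving the entries of $\widetilde V_4^{(n)}$ and the parameters $M^{(2n+1)}, N^{(2n+1)}, d^{(2n+1)}$.

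Since $\mathfrak{J}_q\begin{psmallmatrix}a & b\\c & d\end{psmallmatrix}\mathfrak{J}_q=\begin{psmallmatrix}d & c\\b & a\end{psmallmatrix}$, combining this with the definition \eqref{eq17} of $V^{(2n+1)}$ gives
\begin{equation*}
   \mathfrak{J}_q V^{(2n+1)}(z)\mathfrak{J}_q\mathfrak{D}^{(2n+1)}
   =\begin{pmatrix}
      d_4\dstarinvodd & (z-a)^{-1}c_4\,d^{(2n+1)}\\[1ex]
      (z-a)\,b_4\dstarinvodd & a_4\,d^{(2n+1)}
   \end{pmatrix},
\end{equation*}
where $a_4,b_4,c_4,d_4$ are the entries of $V_4^{(2n+1)}(z)=\widetilde V_4^{(n)}(z)\,C^{(2n+1)}D^{(2n+1)}$. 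Expanding the triangular product gives
$a_4=\widetilde\alpha_4+\widetilde\beta_4 M^{(2n+1)}$,
$c_4=\widetilde\gamma_4+\widetilde\delta_4 M^{(2n+1)}$,
$b_4=a_4 N^{(2n+1)}+\widetilde\beta_4$,
$d_4=c_4 N^{(2n+1)}+\widetilde\delta_4$,
so the theorem reduces to the four block identities
$\delta^{(2n+1)}=a_4 d^{(2n+1)}$,
$\alpha^{(2n+1)}=d_4\dstarinvodd$,
$\beta^{(2n+1)}=(z-a)^{-1}c_4 d^{(2n+1)}$,
$\gamma^{(2n+1)}=(z-a)b_4\dstarinvodd$.

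The two diagonal identities are handled by essentially the same manipulations that appear in the proof of Lemma \ref{lem001}. For $\delta^{(2n+1)}$, substituting \eqref{eq11ar}--\eqref{eq12ar} and $M^{(2n+1)}=-au_n^*\widetilde H_{1,n}^{-1}u_n$ from \eqref{eqW1} yields $a_4=I+zv_n^*R_n^*(\bar z)H_{4,n}^{-1}(u_{4,n}+av_nu_n^*\widetilde H_{1,n}^{-1}u_n)$; a single application of \eqref{eqV670} followed by the telescoping used in the derivation of \eqref{eqW0} collapses this to $P_{1,n+1}^*(\bar z)P_{1,n+1}^{*-1}(0)$, and multiplication by $d^{(2n+1)}$ produces \eqref{eq22m1}. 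For $\alpha^{(2n+1)}$, one uses \eqref{eqW6}, \eqref{eqW4}, the symmetry \eqref{eq:H4H1H3}, and the fundamental identity \eqref{eq78} to pass from $H_{4,n}$ to $H_{3,n}$ and then to $Q_{2,n}^*(\bar z)Q_{2,n}^{*-1}(a)$, matching \eqref{eq11m1}.

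For the off-diagonal entries I would first note that $c_4(a)=b_4(a)=0$, which is the removable-singularity statement following \eqref{eq17}, so the $(z-a)^{\pm 1}$ prefactors produce genuine polynomials. For $\beta^{(2n+1)}$, I would combine \eqref{eqW3}, the formula for $\widetilde\gamma_4$ from \eqref{eq21ar}, and the block manipulations used to prove \eqref{eqW1} to identify $(z-a)^{-1}c_4\,d^{(2n+1)}$ with $-Q_{1,n+1}^*(\bar z)P_{1,n+1}^{*-1}(a)$, matching \eqref{eq12m1}. The most delicate entry is $\gamma^{(2n+1)}$: after substituting \eqref{eqW5} for $N^{(2n+1)}\dstarinvodd$ and applying \eqref{pqH0A} together with further block identities from Lemma \ref{lemA1} to the Kovalishina-type expressions, the polynomial factor $(b-z)(z-a)$ emerges and the quotient identifies with $P_{2,n}^*(\bar z)Q_{2,n}^{*-1}(a)$, matching \eqref{eq21m1}. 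The main obstacle is precisely this last identification, where one must simultaneously produce the quadratic factor $(b-z)(z-a)$, convert expressions involving $H_{4,n}^{-1}$ into those involving $H_{1,n}^{-1}$ and $H_{3,n}^{-1}$ via \eqref{eq:H3H4} and \eqref{eq:H4H1H3}, and track the conjugations $\bar z\leftrightarrow z$ consistently across all four polynomial families $P_{r,n},Q_{r,n}$.
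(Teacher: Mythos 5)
Your reduction of \eqref{eqn434m1} to the four block identities $\alpha^{(2n+1)}=d_4\,\dstarinvodd$, $\beta^{(2n+1)}=(z-a)^{-1}c_4\,d^{(2n+1)}$, $\gamma^{(2n+1)}=(z-a)\,b_4\,\dstarinvodd$, $\delta^{(2n+1)}=a_4\,d^{(2n+1)}$ is exactly the paper's first step (its \eqref{dem1m}--\eqref{dem4m}), and your treatment of the $\delta$-entry is viable: by the adjoint of \eqref{eqV670} one indeed has $u_{4,n}+av_nu_n^*\widetilde H_{1,n}^{-1}u_n=H_{4,n}\widetilde H_{1,n}^{-1}u_n$, hence $a_4=I+zv_n^*R_n^*(\bar z)\widetilde H_{1,n}^{-1}u_n$, and the identification with $P_{1,n+1}^*(\bar z)P_{1,n+1}^{*-1}(0)$ follows from repeating the derivation of \eqref{eqW01} (not \eqref{eqW0}, as you write) at an arbitrary point $z$ in place of $a$.

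For the other three entries, however, the proposal is only a list of lemma numbers --- and partly the wrong ones --- so the substance of the theorem is not established. The engine of the paper's verification is the pair of coupling identities \eqref{eq362a} and \eqref{eqV10} of Lemma~\ref{lemA1}: \eqref{eq362a} converts $R_n^*(\bar z)H_{4,n}^{-1}\{-bu_{4,n}v_n^*+v_n\widehat u_{2,n}^*\}\Sigma_{2,n}$ into $(I-bT_n^*)R_n^*(\bar z)\Sigma_{2,n}$, which combined with \eqref{eqW5} and \eqref{eqW4} (rewriting $N^{(2n+1)}\dstarinvodd$ and $(I+M^{(2n+1)}N^{(2n+1)})\dstarinvodd$ as $-bP_{2,n}^*(0)Q_{2,n}^{*-1}(a)$ and $Q_{2,n}^*(0)Q_{2,n}^{*-1}(a)$) and a short telescoping gives both the $\alpha$- and the $\gamma$-identity; in particular the factor $z-b$ falls out of $-bv_n^*(I-zT_n^*)+zv_n^*(I-bT_n^*)=(z-b)v_n^*$, while $z-a$ is already the explicit prefactor from \eqref{eq17}, so there is no delicate simultaneous production of $(b-z)(z-a)$, nor any need to pass through $H_{1,n}^{-1}$ or $H_{3,n}^{-1}$. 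Likewise \eqref{eqV10} together with \eqref{d2m1} and \eqref{eqW3} settles the $\beta$-identity. Your sketch never invokes \eqref{eq362a} or \eqref{eqV10}; for $\gamma^{(2n+1)}$ you instead cite \eqref{pqH0A}, which concerns $H_{1,n}^{-1}$ and the polynomials $P_{3,n},Q_{3,n}$ of the odd-moment case and plays no role here, and you explicitly leave the identification of $(z-a)b_4\,\dstarinvodd$ with $-(b-z)(z-a)P_{2,n}^*(\bar z)Q_{2,n}^{*-1}(a)$ as an unresolved ``obstacle''. So the proofs of \eqref{dem1m}, \eqref{dem2m} and \eqref{dem3m} --- i.e.\ everything beyond the routine unpacking of $\mathfrak J_q$, the diagonal factors and $C^{(2n+1)}D^{(2n+1)}$ --- are missing from the proposal.
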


\begin{proof}
   Using the 
   block entries \eqref{eq11m1}--\eqref{eq22m1}, \eqref{eq11ar}--\eqref{eq22ar},
   \eqref{eqn:VV14m1} and
   \eqref{eq17},
   we see that Equality \eqref{eqn434m1} is equivalent to the following four equalities:
   \begin{align}
      \alpha^{(2n+1)}(z) &= \big[ \widetilde \gamma_4^{(n)}(z)N^{2n+1}+\widetilde \delta_4^{(n)}(z)(I+M^{(2n+1)}N^{(2n+1)}) \big] \dstarinvodd, \label{dem1m}
      \\
      \beta^{(2n+1)}(z) &= (z-a)^{-1} \big[ \widetilde \gamma_4^{(n)}(z)+
      \widetilde \delta_4^{(n)}(z)M^{(2n+1)} \big] d^{(2n+1)}, \label{dem2m}
      \\
      \gamma^{(2n+1)}(z) &= (z-a)\big[ \widetilde \alpha_4^{(n)}(z)N^{(2n+1)}
      +\widetilde \beta_4^{(n)}(I+M^{(2n+1)}N^{(2n+1)})\big] \dstarinvodd ,
      \label{dem3m}\\
      \delta^{(2n+1)}(z) &= \big[ \widetilde \alpha_4^{(n)}(z)
      +\widetilde \beta^{(2n+1)}(z)M^{(2n+1)}\big] d^{(2n+1)}. \label{dem4m}
   \end{align}

   For the proof of \eqref{dem1m} and \eqref{dem3m} we observe that, by \eqref{eq362a},
   \begin{align*}
      R_n^*({\bar z}) H_{4,n}^{-1} \left\{ -b u_{4,n} v_n^* + v_n u_{2,n}^* \right\} \Sigma_{2,n}
      = R_n^*({\bar z}) (I-bT_n^*) \Sigma_{2,n}
      = (I-bT_n^*)R_n^*({\bar z}) \Sigma_{2,n}.
   \end{align*}
   Hence it follows from \eqref{eqW5} and \eqref{eqW4} that
   \begin{multline*}
      \qquad
      \widetilde \gamma_4^{(n)}(z)N^{(2n+1)}\dstarinvodd 
      + \widetilde \delta_4^{(n)}(z) \big[ I+M^{(2n+1)}N^{(2n+1)} \big] \dstarinvodd 
      \\
      \begin{aligned}
	 &= \big[ -b\widetilde \gamma_4^{(n)}(z)P_{2,n}^*(0)
	 +\widetilde \delta_4^{(n)}(z)Q_{2,n}^*(0) \big] 
	 Q_{2,n}^{*-1}(a)
	 \\
	 &= \big[ -b\widetilde \gamma_4^{(n)}(z)v_n^*
	 -\widetilde \delta_4^{(n)}(z) \widehat u_{2,n}^* \big] 
	 \Sigma_{2,n} Q_{2,n}^{*-1}(a)
	 \\
	 &= \big[ 
	 z u_{4,n}^* R_n^*(\bar z) H_{4,n}^{-1} \left\{ -bu_{4,n}v_n^* + v_n \widehat u_{2,n}^* \right\}
	 - \widehat u_{2,n}^*
	 \big] 
	 \Sigma_{2,n} Q_{2,n}^{*-1}(a)
	 \\
	 &= \big[ 
	 z u_{4,n}^* (I - bT_n^*) R_n^*(\bar z) - \widehat u_{2,n}^*
	 \big] 
	 \Sigma_{2,n} Q_{2,n}^{*-1}(a)
	 \\
	 &= \big[ 
	 z u_{4,n}^* (I - bT_n^*) - \widehat u_{2,n}^* (I-zT_n^*)
	 \big] 
	 R_n(\bar z)^* \Sigma_{2,n} Q_{2,n}^{*-1}(a)
	 \\
	 &= -u_{2,n}^* R_n^*(\bar z) \Sigma_{2,n} Q_{2,n}^{*-1}(a)
	 = Q^*_{2,n}(\bar z) Q_{2,n}^{*-1}(a)
	 = \alpha^{(2n+1)}(z)
      \end{aligned}
   \end{multline*}
   which proves \eqref{dem1m}.
   The proof of \eqref{dem3m} is similar:
   \begin{multline*}
      (z-a) \big[ \widetilde \alpha_4^{(n)}(z)N^{(2n+1)}
      +\widetilde \beta_4^{(n)}(I+M^{(2n+1)}N^{(2n+1)}) \big]\dstarinvodd 
      \\
      \begin{aligned}
	 &=(z-a) \big[ -b \widetilde \alpha_4^{(n)}(z)P_{2,n}^*(0) + \widetilde \beta_4^{(n)}(z)Q^*_{2,n}(0) \big] 
	 Q_{2,n}^{*-1}(a)
	 \\
	 &=(z-a) \big[ -b \widetilde \alpha_4^{(n)}(z)v_n^* - \widetilde \beta_4^{(n)}(z) \widehat u_{2,n}^* \big] 
	 \Sigma_{2,n} Q_{2,n}^{*-1}(a)
	 \\
	 &=(z-a) \big[ 
	 -b v_n^*
	 + z v_n^* R_n^*(\bar z) H_{4,n}^{-1} \left\{ -b u_{4,n} v_n^* + v_n \widehat u_{2,n}^* \right\}
	 \big] 
	 \Sigma_{2,n} Q_{2,n}^{*-1}(a)
	 \\
	 &= (z-a) \big[ 
	 -b v_n^* + z v_n^* (I-bT_n^*)R_n^*(\bar z) 
	 \big] 
	 \Sigma_{2,n}Q_{2,n}^{*-1}(a)
	 \\
	 &= (z-a) \big[ 
	 -b v_n^*(I-zT_n^*) + z v_n^* (I-bT_n^*)
	 \big] 
	 R_n^*(\bar z) \Sigma_{2,n}Q_{2,n}^{*-1}(a)
	 \\
	 &= (z-a)(-b+z) v_n^* R_n^*(\bar z) \Sigma_{2,n}Q_{2,n}^{*-1}(a)
	 \\
	 &= -(z-a)(-b+z) Q_{2,n}^*(\bar z)Q_{2,n}^{*-1}(a)
	 = \gamma^{(2n+1)}(z).
      \end{aligned}
   \end{multline*}

   To prove \eqref{dem2m} and \eqref{dem4m} we first note that, by \eqref{eqV10},
   \begin{equation*}
      R_n^*(\bar z) H_{4, n}^{-1} \left\{ u_{4,n} v_{n+1}^* + a v_n u_{1, n+1}^* \right\} \Sigma_{1,n+1}
      = R_n^*(\bar z) L_{1,n+1}^* \Sigma_{1,n+1}
      = L_{1,n+1}^* R_{n+1}^*(\bar z) \Sigma_{1,n+1}.
   \end{equation*}
   Moreover,
   $u_{4,n}^* L_{1, n+1}^*
   = u_{n}^* (I-aT_n^*) L_{1,n+1}^*
   = u_{n}^* L_{1,n+1}^* (I-aT_{n+1}^*) 
   = u_{1, n+1}^* (I-aT_{n+1}^*)$
   by \eqref{eq:LTLT} and \eqref{eqV91}.
   Hence we obtain \eqref{dem2m}, using \eqref{d2m1} and \eqref{eqW3}, as follows:
   \begin{multline*}
      (z-a)^{-1}\big[
      \widetilde \gamma_4^{(n)}(z)d^{(2n+1)}+
      \widetilde \delta_4^{(n)}(z)  M^{(2n+1)}   d^{(2n+1)}
      \big]
      \\
      \begin{aligned}
	 \qquad
	 & =(z-a)^{-1}\big[
	 \widetilde \gamma_4^{(n)}(z) P_{1,n+1}^*(0) + a \widetilde \delta_4^{(n)}(z)Q_{1,n+1}^*(0) \big] P_{1,n+1}^{*-1}(a) 
	 \\
	 & =(z-a)^{-1}\big[
	 \widetilde \gamma_4^{(n)}(z) v_{n+1}^* - a \widetilde \delta_4^{(n)}(z) u_{1,n+1}^* \big] 
	 \Sigma_{1,n+1} P_{1,n+1}^{*-1}(a) 
	 \\
	 & =(z-a)^{-1}\big[
	 z u_{4,n}^* R_n^*(\bar z) H_{4,n}^{-1} \left\{ u_{4,n} v_{n+1}^* + a v_n u_{1, n+1}^* \right\}
	 - a u_{1, n+1}^* \big]
	 \Sigma_{1,n+1} P_{1,n+1}^{*-1}(a) 
	 \\
	 & =(z-a)^{-1}\big[
	 z u_{4,n}^* L_{1,n+1}^* R_{n+1}(\bar z)^* 
	 - a u_{1, n+1}^* \big]
	 \Sigma_{1,n+1}P_{1,n+1}^{*-1}(a) 
	 \\
	 & =(z-a)^{-1}\big[
	 z u_{1, n+1}^* (I - a T_{n+1}^*) 
	 - a u_{1, n+1}^*  (I - z T_{n+1}^*) \big]
	 R_{n+1}(\bar z)^* 
	 \Sigma_{1,n+1}P_{1,n+1}^{*-1}(a) 
	 \\
	 & = u_{1, n+1}^* R_{n+1}^*(\bar z) 
	 \Sigma_{1,n+1}P_{1,n+1}^{*-1}(a) 
	 = Q_{1, n+1}^*(\bar z) P_{1,n+1}^{*-1}(a) .
      \end{aligned}
   \end{multline*}
   In the third equality, we employed \eqref{eq21ar} and \eqref{eq22ar}.
   The proof of \eqref{dem4m} is similar:
   \begin{multline*}
      \qquad
      \widetilde \alpha_4^{(n)}(z)d^{(2n+1)}
      -\widetilde \beta_4^{(n)}(z)M^{(2n+1)}d^{(2n+1)}
      \\
      \begin{aligned}[b]
	 & = \big[ \widetilde \alpha_4^{(n)}(z) P_{1,n+1}^*(0)
	 -a \widetilde \beta_4^{(n)}(z) Q_{1,n+1}^*(0) \big] 
	 P_{1,n+1}^{*-1}(a)
	 \\
	 & = \big[ v_{n+1}^*
	 + z v_n^* R_n^*(\bar z) H_{4,n}^{-1}
	 \left\{ u_{4,n} v_{n+1}^* + a v_n u_{1, n+1}^* \right\}
	 \big]
	 \Sigma_{1,n+1} P_{1,n+1}^{*-1}(a)
	 \\
	 & = \big[ v_{n+1}^* + z v_n^* L_{1,n+1}^* R_{n+1}^*(\bar z) 
	 \big]
	 \Sigma_{1,n+1} P_{1,n+1}^{*-1}(a)
	 \\
	 & = \big[ v_{n+1}^* (I-zT_{n+1}^*) + z v_n^* L_{1,n+1}^* 
	 \big]
	 R_{n+1}(\bar z)^* 
	 \Sigma_{1,n+1} P_{1,n+1}^{*-1}(a)
	 \\
	 & = v_{n+1}^* R_{n+1}(\bar z)^* \Sigma_{1,n+1} P_{1,n+1}^{*-1}(a)
	 = P_{1,n+1}(\bar z)^*  P_{1,n+1}^{*-1}(a).
      \end{aligned}
      \qedhere
   \end{multline*}
\end{proof}

\subsection{Case of an odd number of moments} 
In this subsection, we prove the relation between of the resolvent matrix
proposed in \cite{D-C} and \cite{abdon2} in the case of an odd number of moments.
\smallskip

First, we reproduce the resolvent matrix from \cite{abdon2} for the case of an odd number of moments.

\begin{assumption}
   \label{ass:Gamma}
   In this section we will always assume that
   \begin{equation*}
      \tag{$\Gamma$}
      I+a v_n^* R_n^*(a)H_{1,n}^{-1}u_{1,n}\qquad \text{is invertible.}
   \end{equation*}
\end{assumption}

\begin{definition}\label{defA01}
Let us define the $q\times q$ matrices
\begin{align}
   \Gamma_a:=&
   \left(I+a v_n^* R_n^*(a)H_{1,n}^{-1}u_{1,n} \right)^{-1}
   v_n^* R_n^*(a)H_{1,n}^{-1} v_n,\label{gammaa}\\
   \Gamma_b:=&
   \left(I+b v_n^* R_n^*(b)H_{1,n}^{-1}u_{1,n} \right)^{-1}
   v_n^* R_n^*(b)H_{1,n}^{-1} v_n \label{gammab}
\end{align}
and
\begin{align}
   M^{(2n)}:=
   a \Gamma_a,\qquad
   N^{(2n)} :=\left(b\Gamma_b -a \Gamma_a\right)^{-1}. \label{eqnNM2n}
\end{align}
Moreover, we define the $2q\times 2q$ matrices 
\begin{align}
   \label{eq:CD}
   C_{1}^{(2n)}&:=
   \begin{pmatrix}
      I       & M^{(2n)}                \\
      0   & I
   \end{pmatrix},
   \quad
   D_{1}^{(2n)}:=
   \begin{pmatrix}
      I       & 0                \\
      N^{(2n)}   & I
   \end{pmatrix},
\end{align}
and
\begin{align}\label{eqn:VV14}
   V^{(2n)}(z):=&\widetilde V_{1}^{(n)}(z)
   C_{1}^{(2n)}D_{1}^{(2n)}
\end{align}
The matrix \eqref{eqn:VV14} is called the 
\define{first auxiliary resolvent matrix in the point $0$ of the THMM problem in the case of an even number of moments}.
\end{definition}

\begin{remark}
   \begin{enumerate}[(i)]
      \item
      Note that $a\Gamma_a = \widetilde\alpha_1^{n}(a)^{-1}\widetilde\beta_1^n(a)$
      and $b\Gamma_b = \widetilde\alpha_1^{n}(b)^{-1}\widetilde\beta_1^n(b)$.

      \item 
      We will show in Lemma~\ref{lem4.2} that $b\Gamma_b  - a\Gamma_a$ is invertible, hence $N^{(2n)}$ is well-defined.

   \end{enumerate}
\end{remark}

Now we recall the resolvent matrix of the THMM problem for the
case of an odd number of moments given in terms of orthogonal matrix polynomials
\cite{abKN}.
\begin{definition}[\protect{\cite[Theorem 3.5]{abKN}}] 
   \label{rmodd2}
   Let $(s_j)_{j=0}^{2n}$
   be a Hausdorff positive definite sequence on $[a,b]$.
   Let the matrices $P_{3,n}$, $P_{4,n}$, $Q_{3,n}$ and $Q_{4,n}$ be as in Definition \ref{de002AAA}.
   The $2q\times 2q$ matrix polynomial
   \begin{equation}
      U^{(2n)}(z):=
      \begin{pmatrix}
	 \alpha^{(2n)}(z) & \beta^{(2n)}(z)\\
	 \gamma^{(2n)} (z)& \delta^{(2n)}(z)
      \end{pmatrix},\quad z\in {\mathbb C},
      \label{RM1}
   \end{equation}
   with
   \begin{align}
      \alpha^{(2n)}(z):=&Q_{4,n}^*(\bar z)Q_{4,n}^{*-1}(a),
      \label{eq11}\\
      \beta^{(2n)}(z):=&\frac{1}{b-a}Q_{3,n}^*(\bar z)P_{3,n}^{*-1}(a),
      \label{eq12}\\
      \gamma^{(2n)}(z):=&(z-a)P_{4,n}^*(\bar z)Q_{4,n}^{*-1}(a),
      \label{eq21}\\
      \delta^{(2n)}(z):=&\frac{b-z}{b-a}P_{3,n}^*(\bar z)P_{3,n}^{*-1}(a).
      \label{eq22}
\end{align}
 is called the \define{resolvent matrix of the THMM problem in the point $a$ in the case of an odd number
 of moments}.
\end{definition}
\begin{remark}\label{rem00C}
   Let $Q_{4,n}$ be defined as in \eqref{Qq4}.
   Then the matrix $Q_{4,n}(0)$ is invertible.
\end{remark}
\begin{proof}
   Recall that the matrix $Q_{4,n}(a)$ is invertible by Remark~\ref{remPaQa} and that
   equality \eqref{pqH2B} gives for $z=0$ 
   \begin{equation}\label{eqQaa}
      I+ a v_n^*R_n^*(a)H_{1,n}^{-1} u_{1,n}=Q_{4,n}^{-1}(a)Q_{4,n}(0).
   \end{equation}
   Since by Assumption~\ref{ass:Gamma} the left hand side of \eqref{eqQaa} is invertible,
   the matrix $Q_{4,n}(0)$ is invertible.
\end{proof}

\begin{definition}
   Let $Q_{4,n}$ be as in \eqref{Qq4}. 
   We set
   \begin{equation}
      d^{(2n)}:=Q_{4,n}^*(0)Q_{4,n}^{*-1}(a).\label{eqnd11}
   \end{equation}

\end{definition}

\begin{lem}\label{lem4.2} Let $P_{k,n}$, $Q_{k,n}$ for $k=3,4$, $\Gamma_a$, $\Gamma_b$,
  $M^{(2n)}$ and $N^{(2n)}$ be as in Definition~\ref{de002AAA}, \eqref{gammaa}, \eqref{gammab} and \eqref{eqnNM2n}, respectively.
  Then the following equalities are valid.
  \begin{align}
     \Gamma_a &= -P_{4,n}^*(0)Q_{4,n}^{*-1}(0), \label{eqn38A}\\
     \Gamma_b &= P_{3,n}^*(0)Q_{3,n}^{*-1}(0), \label{eqn39A}\\
     \Gamma_a &= \Gamma_a^{*}, \label{eqn43A}\\
     \Gamma_b &= \Gamma_b^{*}, \label{eqn44A}\\
     M^{(2n)} &= -a P_{4,n}^*(0)Q_{4,n}^{*-1}(0), \label{eqn45a}\\
     N^{(2n)} &= (b-a)^{-1}Q_{3,n}^*(0) P_{3,n}^{*-1}(a)Q_{4,n}^{-1}(a)Q_{4,n}(0),
     \label{eqn001}
     \\
     \label{eqn101}
     N^{(2n)} \dstarinveven &= (b-a)^{-1} Q_{3,n}(0)^* P_{3,n}^{*-1}(a),
     \\
     \label{eqn102}
     (I + M^{(2n)} N^{(2n)}) \dstarinveven 
     &= 
     (b-a)^{-1} b P_{3,n}^*(0)P_{3,n}^{*-1}(a).
  \end{align}
\end{lem}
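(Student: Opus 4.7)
The plan is to establish the seven equalities in sequence, leveraging two structural observations: since $R_n(0) = I$, the constant-term evaluations factor cleanly as $P_{r,n}^*(0) = v_n^* \Sigma_{r,n}$ and $Q_{r,n}^*(0) = u_{r,n}^* \Sigma_{r,n}$, which converts $\Sigma_{r,n}$-identities into polynomial identities; and Remark~\ref{rem:Q0P0} at $z=0$ gives $Q_{r,n}(0) P_{r,n}^*(0) = P_{r,n}(0) Q_{r,n}^*(0)$, so $P_{r,n}^*(0) Q_{r,n}^{*-1}(0) = Q_{r,n}^{-1}(0) P_{r,n}(0)$ is self-adjoint.

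First, I would prove \eqref{eqn38A} by pre-multiplying \eqref{eqn361} (at $j=n$) by $v_n^* R_n^*(a) H_{1,n}^{-1}$. Using $v_n^* R_n^*(a)(I - aT_n^*) = v_n^*$ and the factorizations above, the three resulting terms become
\[
v_n^* R_n^*(a) H_{1,n}^{-1} v_n \cdot Q_{4,n}^*(0) + a \bigl(v_n^* R_n^*(a) H_{1,n}^{-1} u_{1,n}\bigr) P_{4,n}^*(0) + P_{4,n}^*(0) = 0,
\]
which rearranges directly to \eqref{eqn38A} after inverting $I + a v_n^* R_n^*(a) H_{1,n}^{-1} u_{1,n}$, permissible by Assumption~\ref{ass:Gamma}. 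The proof of \eqref{eqn39A} is strictly analogous, starting from \eqref{eqn362} and pre-multiplying by $v_n^* R_n^*(b) H_{1,n}^{-1}$. Identities \eqref{eqn43A} and \eqref{eqn44A} follow from the self-adjointness observation, and \eqref{eqn45a} is \eqref{eqn38A} rescaled by $a$ per \eqref{eqnNM2n}.

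The main work is \eqref{eqn001}. My plan is to combine the identities just obtained into the self-adjoint form
\[
b\Gamma_b - a\Gamma_a = b Q_{3,n}^{-1}(0) P_{3,n}(0) + a Q_{4,n}^{-1}(0) P_{4,n}(0),
\]
and verify by direct computation that right-multiplying by the candidate $X := (b-a)^{-1} Q_{3,n}^*(0) P_{3,n}^{*-1}(a) Q_{4,n}^{-1}(a) Q_{4,n}(0)$ yields the identity. In the first summand one applies $P_{3,n}(0) Q_{3,n}^*(0) = Q_{3,n}(0) P_{3,n}^*(0)$; factoring $Q_{4,n}^{-1}(0)$ to the left of both summands then exposes the bracket $b Q_{4,n}(0) P_{3,n}^*(0) + a P_{4,n}(0) Q_{3,n}^*(0)$, which by \eqref{eq:rem00B:2} equals $(b-a) Q_{4,n}(a) P_{3,n}^*(a)$. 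The remaining factors $P_{3,n}^{*-1}(a) Q_{4,n}^{-1}(a) Q_{4,n}(0)$ then telescope together with the $(b-a)$ factor to give $I$, establishing \eqref{eqn001}.

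Finally, from \eqref{eqnd11} one has $\dstarinveven = Q_{4,n}^{-1}(0) Q_{4,n}(a)$, so right-multiplying \eqref{eqn001} by this factor cancels the trailing $Q_{4,n}^{-1}(a) Q_{4,n}(0)$ and yields \eqref{eqn101} immediately. For \eqref{eqn102}, I would expand $(I + M^{(2n)} N^{(2n)})\dstarinveven$, substitute \eqref{eqn45a} and \eqref{eqn101}, rewrite $P_{4,n}^*(0) Q_{4,n}^{*-1}(0) = Q_{4,n}^{-1}(0) P_{4,n}(0)$ by self-adjointness, factor out $(b-a)^{-1} Q_{4,n}^{-1}(0)$, and again invoke \eqref{eq:rem00B:2} to collapse the bracket to $b Q_{4,n}(0) P_{3,n}^*(0)$. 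I expect \eqref{eqn001} to be the only real obstacle: the candidate inverse is asymmetric in the index pair $(3,4)$, and the computation only works if one multiplies from the right and orders the factors so that Corollary~\ref{rem00BB} applies verbatim; once this ordering is identified, every other identity is a short deduction.
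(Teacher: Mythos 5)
Your proof is correct, and its backbone coincides with the paper's: the same key ingredients (the coupling identities of Lemma~\ref{lemA1}, the symmetry of Remark~\ref{rem:Q0P0} at $z=0$, and the definitions \eqref{eqnNM2n}, \eqref{eqnd11}) drive every step, so this is essentially the paper's argument in a slightly different packaging. The differences are worth noting. For \eqref{eqn38A} and \eqref{eqn39A} the paper simply points to the proof of Remark~\ref{rem00C}, which in fact only records \eqref{eqQaa} (i.e.\ the invertibility statement coming from \eqref{pqH2B}); your direct derivation---hitting \eqref{eqn361}, resp.\ \eqref{eqn362}, with $v_n^*R_n^*(a)H_{1,n}^{-1}$, resp.\ $v_n^*R_n^*(b)H_{1,n}^{-1}$, and using $R_n^*(\cdot)(I-\cdot\,T_n^*)=I$---is more self-contained and arguably fills in what the paper leaves implicit (you still need the invertibility of $Q_{4,n}(0)$, resp.\ $Q_{3,n}(0)$, to peel it off, which is Remark~\ref{rem00C}, resp.\ the identity $I+bv_n^*R_n^*(b)H_{1,n}^{-1}u_{1,n}=Q_{3,n}^{-1}(b)Q_{3,n}(0)$ together with \eqref{pqH1A}; the paper is no more explicit on this point than you are). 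For \eqref{eqn001} the paper computes $a\Gamma_a-b\Gamma_b$ directly as $(b-a)Q_{4,n}^{-1}(0)Q_{4,n}(a)P_{3,n}^*(a)Q_{3,n}^{*-1}(0)$ by passing to the $\Sigma$-level and invoking \eqref{eqn363}, then inverts; you instead verify that the candidate is a right inverse of $b\Gamma_b-a\Gamma_a$ using \eqref{eq:rem00B:2}, which is itself a direct consequence of \eqref{eqn363}, so the mathematical content is identical and both routes establish that $N^{(2n)}$ is well defined (if you prefer to stay within the odd-moment setting, cite \eqref{eqn363} rather than Corollary~\ref{rem00BB}, which is stated in the even-moment subsection although its second identity needs only $(s_j)_{j=0}^{2n}$). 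Finally, for \eqref{eqn102} the paper uses $[N^{(2n)}]^{-1}+M^{(2n)}=b\Gamma_b$, whereas you substitute \eqref{eqn45a} and \eqref{eqn101} and apply \eqref{eq:rem00B:2} once more; both computations are short and correct.
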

\begin{proof} 
   The equality \eqref{eqn38A} was shown in the proof of Remark~\ref{rem00C}.
   To prove \eqref{eqn43A} and \eqref{eqn44A} it suffices to note that 
   $P_{r,n}(0)^* [Q_{r,n}(0)^*]^{-1} = [Q_{r,n}(0)]^{-1} P_{r,n}(0)$
   which is an immediate consequence of Remark~\ref{rem:Q0P0}.
   Equation \eqref{eqn45a} follows directly from the definition of $M^{(2n)}$ and \eqref{eqn38A}.
   For the proof of \eqref{eqn001} note that by \eqref{eqn38A}, \eqref{eqn39A} and the selfadjointness of $\Gamma_a$ we have that
   \begin{align*}
      a\Gamma_a - b\Gamma_b
      & = -  a Q_{4,n}(0)^{-1} P_{4,n}(0)  - b P_{3,n}(0)^* Q_{3,n}(0)^{*^-1} 
      \\
      & = -  Q_{4,n}(0)^{-1} \left[ a P_{4,n}(0) Q_{3,n}(0)^* - b Q_{4,n}(0) P_{3,n}(0)^* \right] Q_{3,n}(0)^{*-1} 
      \\
      & = -  Q_{4,n}(0)^{-1} \Sigma_{4,n}^* \left[ a v_{n}^* u_{3,n}^* - b u_{4,n} v_{n}^* \right] \Sigma_{3,n} Q_{3,n}(0)^{*-1} 
      \\
      & = -(a-b) (Q_{4,n}(0)^{-1} \Sigma_{4,n}^* R_n(a) u_{4,n} v_n^* R_n(a)^* \Sigma_{3,n} Q_{3,n}(0)^{*-1} 
      \\
      &= (b-a) (Q_{4,n}(0)^{-1} Q_{4,n}(a)^* P_{3,n}(a)^*  Q_{3,n}(0)^{*-1}.
   \end{align*}
   By Remark~\ref{remPaQa}, the matrices $Q_{4,n}(a)^*$ and $P_{3,n}(a)^*$ are invertible, hence $N^{(2n)}$ is well-defined and \eqref{eqn001} holds.
   Now also \eqref{eqn101} is clear.
   Finally, we show \eqref{eqn102}.
   \begin{multline*}
     (I + M^{(2n)} N^{(2n)}) \dstarinveven 
     = \left[ [N^{(2n)}]^{-1} + M^{(2n)} \right]  N^{(2n)}\dstarinveven 
     = b\Gamma_b  N^{(2n)}\dstarinveven 
     \\
     = (b-a)^{-1} b P_{3,n}^*(0)Q_{3,n}^{*-1}(0) Q_{3,n}^{*}(0)P_{3,n}^{*-1}(a)
      = (b-a)^{-1} b P_{3,n}^*(0)P_{3,n}^{*-1}(a).
   \end{multline*}
\end{proof}

With $d^{(2n)}$ as in \eqref{eqnd11}, we define
 \begin{align}
    \label{D2n}
    {\mathfrak D^{(2n)}}:=
    \begin{pmatrix}
       d^{(2n)}& 0 \\[1ex]
       0 & \dstarinveven
    \end{pmatrix}.
 \end{align}

Now we formulate and prove the main result of this subsection.
\begin{theorem}\label{thmainodd}
Let  $U^{(2n)}(z)$, ${\mathfrak J}_q$, $V_1^{(2n)}$ and ${\mathfrak D^{(2n)}}$
be matrices as in \eqref{RM1}, \eqref{eqn432}, \eqref{eqn:VV14} and \eqref{D2n}, respectively.
Then
the following equality holds:
\begin{align}\label{eqn434}
   U^{(2n)}(z)={\mathfrak J}_qV^{(2n)}(z){\mathfrak J}_q {\mathfrak D^{(2n)}}.
\end{align}
\end{theorem}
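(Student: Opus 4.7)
The strategy closely parallels the proof of Theorem~\ref{thmain}. Since $\mathfrak{J}_q$ swaps both block rows and block columns and $\mathfrak{D}^{(2n)}$ is block diagonal with entries $d^{(2n)}$ and $\dstarinveven$, expanding $V^{(2n)}(z)=\widetilde V_1^{(n)}(z)C_1^{(2n)}D_1^{(2n)}$ via Remark~\ref{rem00A} and \eqref{eq:CD} and then computing $\mathfrak{J}_q V^{(2n)}(z)\mathfrak{J}_q\mathfrak{D}^{(2n)}$ entry by entry reduces \eqref{eqn434} to the four block identities
\begin{align*}
   \alpha^{(2n)}(z) &= \bigl[\widetilde\gamma_1^{(n)}(z)M^{(2n)} + \widetilde\delta_1^{(n)}(z)\bigr]d^{(2n)},
   \\
   \gamma^{(2n)}(z) &= \bigl[\widetilde\alpha_1^{(n)}(z)M^{(2n)} + \widetilde\beta_1^{(n)}(z)\bigr]d^{(2n)},
   \\
   \beta^{(2n)}(z) &= \bigl[\widetilde\gamma_1^{(n)}(z)(I+M^{(2n)}N^{(2n)}) + \widetilde\delta_1^{(n)}(z)N^{(2n)}\bigr]\dstarinveven,
   \\
   \delta^{(2n)}(z) &= \bigl[\widetilde\alpha_1^{(n)}(z)(I+M^{(2n)}N^{(2n)}) + \widetilde\beta_1^{(n)}(z)N^{(2n)}\bigr]\dstarinveven,
\end{align*}
which are the natural analogues of \eqref{dem1m}--\eqref{dem4m} for the odd case.

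Next, I would substitute Lemma~\ref{lem4.2} to express $M^{(2n)}d^{(2n)}=-aP_{4,n}^*(0)Q_{4,n}^{*-1}(a)$ together with $N^{(2n)}\dstarinveven$ and $(I+M^{(2n)}N^{(2n)})\dstarinveven$ from \eqref{eqn101}--\eqref{eqn102}, and use the evaluations $P_{r,n}^*(0)=v_n^*\Sigma_{r,n}$, $Q_{r,n}^*(0)=u_{r,n}^*\Sigma_{r,n}$ for $r=3,4$. The $(b-a)^{-1}$ present in $\beta^{(2n)}$ and $\delta^{(2n)}$ cancels against that carried by \eqref{eqn101}--\eqref{eqn102}, and after cancellation each of the four identities takes the form $[\text{explicit operator in }z]\cdot R_n^*(\bar z)\Sigma_{r,n} = \text{target polynomial}$, with $r=4$ in the $\alpha^{(2n)},\gamma^{(2n)}$ equations and $r=3$ in the $\beta^{(2n)},\delta^{(2n)}$ equations.

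The core verification then rests on the coupling identities of Lemma~\ref{lemA1}. For the $\alpha^{(2n)}$ and $\gamma^{(2n)}$ equations I would apply \eqref{eqn361} in the form $(au_{1,n}v_n^*+v_nu_{4,n}^*)\Sigma_{4,n}=-H_{1,n}(I-aT_n^*)\Sigma_{4,n}$, which cancels the $H_{1,n}^{-1}$ inside $\widetilde V_1^{(n)}$. For the $\beta^{(2n)}$ and $\delta^{(2n)}$ equations the analogous role is played by \eqref{eqn362} in the form $(v_nu_{3,n}^*-bu_{1,n}v_n^*)\Sigma_{3,n}=H_{1,n}(I-bT_n^*)\Sigma_{3,n}$. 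Once the $H_{1,n}^{-1}$ is eliminated, the elementary commutation $zR_n^*(\bar z)T_n^* = R_n^*(\bar z)-I$, together with $u_{4,n}^*=u_n^*(I-aT_n^*)$ and $u_{3,n}^*=-u_n^*(I-bT_n^*)$, telescopes the remainder onto the required prefactor $(z-a)$, $(b-z)$ or $1$ times $v_n^*R_n^*(\bar z)\Sigma_{r,n}=P_{r,n}^*(\bar z)$ or $u_{r,n}^*R_n^*(\bar z)\Sigma_{r,n}=Q_{r,n}^*(\bar z)$, yielding exactly the entries of Definition~\ref{rmodd2}.

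The main obstacle is purely combinatorial: in each of the four equations the factor $H_{1,n}^{-1}$ of $\widetilde V_1^{(n)}$ must interact correctly with the $\Sigma_{r,n}$ (which encode $H_{3,n}^{-1}$ or $H_{4,n}^{-1}$), and the constants $a$, $b$ and $(b-a)^{-1}$ must align so that the scalar prefactors of Definition~\ref{rmodd2} emerge cleanly. No identity beyond those already collected in Lemma~\ref{lemA1} and Lemma~\ref{lem4.2} is required for the proof itself; in particular, \eqref{eqn363} enters only via the proof of Lemma~\ref{lem4.2}.
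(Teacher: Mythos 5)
Your proposal is correct and follows essentially the same route as the paper: it reduces \eqref{eqn434} to the same four block identities, substitutes $M^{(2n)}$, $d^{(2n)}$, $N^{(2n)}\dstarinveven$ and $(I+M^{(2n)}N^{(2n)})\dstarinveven$ via Lemma~\ref{lem4.2}, and then eliminates $H_{1,n}^{-1}$ through the coupling identities \eqref{eqn361} and \eqref{eqn362} before telescoping with $R_n^*(\bar z)$ onto the entries of Definition~\ref{rmodd2}. Only the phrasing that the factor $(b-a)^{-1}$ ``cancels'' is loose --- it is precisely the factor carried by \eqref{eqn101}--\eqref{eqn102} that produces the $(b-a)^{-1}$ in $\beta^{(2n)}$ and $\delta^{(2n)}$ --- but this does not affect the argument.
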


\begin{proof}
  Using the
   block entries \eqref{eq11}--\eqref{eq22}, \eqref{eq11ar}--\eqref{eq22ar}, \eqref{eqn:VV14} and
   \eqref{eqnNM2n},
   we see that \eqref{eqn434} is equivalent to the following four equalities:
   \begin{align}
      \alpha^{(2n)}(z) 
      &= \left[ \widetilde \gamma_1^{(n)}(z)M^{(2n)}
      + \widetilde \delta_1^{(n)}(z)\right]d^{(2n)} , \label{dem1}
      \\
      \beta^{(2n)}(z) 
      &= \left[ \widetilde \gamma_1^{(n)}(z)(I+M^{(2n)}N^{(2n)})
      + \widetilde \delta_1^{(n)}(z)N^{(2n)} \right]\dstarinveven, \label{dem2}
      \\
      \gamma^{(2n)}(z) 
      &= \left[ \widetilde \alpha_1^{(n)}(z)M^{(2n)}
      + \widetilde \beta_1^{(n)} \right] d^{(2n)}, \label{dem3}
      \\
      \delta^{(2n)}(z) 
      &= \left[ \widetilde \alpha_1^{(n)}( z)(I+M^{(2n)}N^{(2n)})
      + \widetilde \beta_1^{(n)}( z)N^{(2n)} \right] \dstarinveven. \label{dem4}
   \end{align}
   For the proof of 
   \eqref{dem1} and \eqref{dem3} we will use that, by \eqref{eqn361}, 
   \begin{align*}
      R_n(\bar z)^* H_{1,n}^{-1} 
      \left\{ -a u_{1,n} v_n^* - v_n u_{4,n} \right\}
      \Sigma_{4,n}
      &= R_n(\bar z)^* (1-aT_n^*) \Sigma_{4,n}
      = (I-aT_n^*) R_n(\bar z)^* \Sigma_{4,n}.
   \end{align*}
   Therefore, by \eqref{eq11}, \eqref{eqn45a} and \eqref{eqnd11}, we have:
   \begin{align*}
      \left[ \widetilde \gamma_1^{(n)}(z)M^{(2n)} + \widetilde \delta_1^{(n)}( z) \right] d^{(2n)}
      &=\left[ 
      -a \widetilde \gamma_1^{(n)}(z)P_{4,n}(0)^* Q_{4,n}(0)^{*-1} 
      + \widetilde \delta_1^{(n)} \right] Q_{4,n}(0)^* Q_{4,n}(0)^{*-1}
      \\
      &= \left[ z u_{1,n}^* R_n(\bar z)^* H_{1,n}^{-1} 
      \left\{ -a u_{1,n} v_n^* - v_n u_{4,n} \right\}
      + u_{4,n}^* \right] 
      \Sigma_{4,n} Q_{4,n}(0)^{*-1}
      \\
      &= \left[ z u_{1,n}^* (1-aT_n^*) + u_{4,n}^* (1-zT_n^*) \right] 
      R_n(\bar z)^* \Sigma_{4,n} Q_{4,n}(0)^{*-1}
      \\
      &= \left[ z u_{4,n}^* T_n^* + u_{4,n}^* (I-zT_n^*) \right] 
      R_n(\bar z)^* \Sigma_{4,n} Q_{4,n}(0)^{*-1}
      \\
      &= u_{4,n}^* R_n(\bar z)^* \Sigma_{4,n} Q_{4,n}(a)^{*-1}
      = Q_{4,n}(\bar z)^*Q_{4,n}(a)^{*-1}
      = \alpha^{(2n)}(z)
   \end{align*}
   where we used that 
   $u_{1,n}^* (I-aT_n^*) = u_{n}^* (I-aT_n^*) T_n^* = u_{4,n}^* T_n^*$.
   Hence \eqref{dem1} is proved.
   For the proof of \eqref{dem1}, we calculate
   \begin{align*}
      \left[ \widetilde \alpha_1^{(n)}(z) M^{(2n)} + \widetilde \beta_1^{(n)}( z) \right] d^{(2n)}
      &=\left[ 
      -a \widetilde \alpha_1^{(n)}(z)P_{4,n}(0)^* Q_{4,n}(0)^{*-1} 
      + \widetilde \beta_1^{(n)} \right] Q_{4,n}(0)^* Q_{4,n}(0)^{*-1}
      \\
      &= \left[ -av_n^* 
      + z v_n^* R_n(\bar z)^* H_{1,n}^{-1} 
      \left\{ -a u_{1,n} v_n^* - v_n u_{4,n} \right\}
      \right] 
      \Sigma_{4,n} Q_{4,n}(a)^{*-1}
      \\
      &= \left[ -av_n^* (1-zT_n^*)
      + z v_n^* (I-aT_n^*) \right] 
      R_n(\bar z)^* \Sigma_{4,n} Q_{4,n}(a)^{*-1}
      \\
      &= (z-a) v_n^* R_n(\bar z)^* \Sigma_{4,n} Q_{4,n}(a)^{*-1}
      = (z-a) P_{4,n}(z)^* Q_{4,n}(a)^{*-1}
      \\
      &= \gamma^{(2n)}(z).
   \end{align*}
   
   For the proof of \eqref{dem2} and \eqref{dem2}, we will use that, by \eqref{eqn362}, the following holds:
   \begin{align*}
      R_n(\bar z)^* H_{1,n}^{-1} 
      \left\{ b u_{1,n} v_n^* - v_n u_{3,n}^* \right\}
      \Sigma_{3,n}
      &= -R_n(\bar z)^* (I-bT_n^*) \Sigma_{3,n}
      = -(I-bT_n^*) R_n(\bar z)^* \Sigma_{3,n}.
   \end{align*}

   Therefore, by \eqref{eqn102} and \eqref{eqn101} we have that 
   \begin{multline*}
      \left[ \widetilde \gamma_1^{(n)}( z)(I+M^{(2n)}N^{(2n)}) 
      +\widetilde \delta_1^{(n)}(z)N^{(2n)} \right]\dstarinveven
      \\
      \begin{aligned}
	 &= (b-a)^{-1} \left[
	 b\widetilde \gamma_1^{(n)}(z) P_{3,n}(0)^* P_{3,n}(a)^{*-1}
	 +\widetilde \delta_1^{(n)}(z) Q_{3,n}(0)^* P_{3,n}(a)^{*-1}
	 \right]
	 \\
	 &= (b-a)^{-1} \left[
	 b\widetilde \gamma_1^{(n)}(z) v_n^*
	 +\widetilde \delta_1^{(n)}(z) u_{3,n}^*
	 \right] \Sigma_{3,n} P_{3,n}(a)^{*-1}
	 \\
	 &= (b-a)^{-1} \left[
	 z u_{1,n}^* R_n(\bar z)^* H_{1,n}^{-1} 
	 \left\{ b u_{1,n} v_n^* - v_n u_{3,n}^* \right\}
	 + u_{3,n}^*
	 \right] \Sigma_{3,n} P_{3,n}(a)^{*-1}
	 \\
	 &= (b-a)^{-1} \left[
	 - z u_{1,n}^* (1-bT_n^*)
	 + u_{3,n}^* (I-zT_n^*)
	 \right] R_n(\bar z)^* \Sigma_{3,n} P_{3,n}(a)^{*-1}
	 \\
	 &= (b-a)^{-1} u_{3,n}^* R_n(\bar z)^* \Sigma_{3,n} P_{3,n}(a)^{*-1}
	 = (b-a)^{-1} Q_{3,n}(\bar z)^* P_{3,n}(a)^{*-1}
	 = \beta^{(2n)}(z)
      \end{aligned}
   \end{multline*}
   where we used that
   $u_{1,n}^* (I-bT_n^*) =u_{n}^* T_n^*(I-bT_n^*) = -u_{3,n}^* T_n^*$.
   Finally, we prove \eqref{dem4}. 
   Again we use \eqref{eqn102} and \eqref{eqn101} to obtain
   \begin{multline*}
      \left[ \widetilde \alpha_1^{(n)}(z)(I+M^{(2n)}N^{(2n)})
      + \widetilde \beta_1^{(n)}(z)N^{(2n)} \right] \dstarinveven
      \\
      \begin{aligned}[b]
	 &= (b-a)^{-1} \left[ 
	 b \widetilde \alpha_1^{(n)}(z) P_{3,n}^*(0) P_{3,n}(a)^{*-1}
	 + \widetilde \beta_1^{(n)}(z) Q_{3,n}(0)^* P_{3,n}(a)^{*-1}
	 \right]
	 \\
	 &= (b-a)^{-1} \left[ 
	 b  \widetilde \alpha_1^{(n)}(z) v_n^*
	 + \widetilde \beta_1^{(n)}(z) u_{3,n}^* 
	 \right]
	 \Sigma_{3,n} P_{3,n}(a)^{*-1}
	 \\
	 &= (b-a)^{-1} \left[ 
	 b v_n^*
	 + z v_n^*R_n(\bar z)^* H_{1,n}^{-1}
	 \left\{ b u_{1,n} v_n^*
	 - v_n u_{3,n}^* \right\}
	 \right]
	 \Sigma_{3,n} P_{3,n}(a)^{*-1}
	 \\
	 &= (b-a)^{-1} \left[ 
	 b v_n^*
	 - z v_n^* (1-bT_n^*) R_n(\bar z)^*
	 \right]
	 \Sigma_{3,n} P_{3,n}(a)^{*-1}
	 \\
	 &= (b-z)(b-a)^{-1} 
	 v_n^* R_n(\bar z)^* \Sigma_{3,n} P_{3,n}(a)^{*-1}
	 \\
	 &= (b-z)(b-a)^{-1} P_{3,n}(\bar z)^* P_{3,n}(a)^{*-1}
	 = \delta^{2n}(z).
   \end{aligned}
   \qedhere
   \end{multline*}
\end{proof}

\appendix

\section{Power series expansion of the resolvent matrices}

\subsection{Expansion in $z=0$} 

Let $(s_k)_{k=0}^{2n+1}$ be a Hausdorff positive definite sequence on $[a,b]$.
 Let $H_{r,n}$, $u_{r,n}$ for
$r=3, 4$, $R_n$ and $v_n$ be defined as in \eqref{201A},
\eqref{70uu}, \eqref{hatu2}, \eqref{21A}, \eqref{22}, \eqref{52} and \eqref{59}, respectively.
Furthermore, assume that $\widetilde H_{1,n}$ from \eqref{eqHt1n} is an invertible matrix.

According to Theorem~4 of \cite{D-C} the entries of the matrix 
\begin{align*}
   V^{(2n+1)} = 
   \begin{pmatrix}
      \widehat{\alpha}^{(2n+1)}(z) & \widehat{\beta}^{(2n+1)}(z) \\
      \widehat{\gamma}^{(2n+1)}(z) & \widehat{\delta}^{(2n+1)}(z)
   \end{pmatrix}
\end{align*}
from \eqref{eq17} are
\begin{align}
   \widehat{\alpha}^{(2n+1)}(z)&:=I+zv_n^{*}R_n^*(\bar z) \left( \frac {
   bH_{4,n}+aH_{3,n}}{b-a} \right)^{-1}  u_n,
   \label{eqn:20}\\
   \widehat{\gamma}^{(2n+1)}(z)&:=u_n^{*}R_n^*(\bar z) \left( \frac {b
   H_{4,n}+aH_{3,n}}{b-a} \right)^{-1}  u_n,
   \label{eqn:21}\\
   \widehat{\beta}^{(2n+1)}(z)&:=(z-b)(z-a)v_n^{*}R_n^*(\bar z)
   \frac {a H_{4,n}^{-1}+bH_{3,n}^{-1}}{b-a}v_n ,
   \label{eqn:22}\\
   \widehat{\delta}^{(2n+1)}(z)&:= I+u_n^{*}R_n^*(\bar z)
   \frac{a(z-b)R_n^{*{-1}}(a)H_{4,n}^{-1}+
   b(z-a)R_n^{*{-1}}(b)H_{3,n}^{-1}}{b-a}v_n.
   \label{eqn:23}
\end{align}

\begin{remark} \label{remsep1}
Using the Equalities \eqref{eqHtH34} and \eqref{eq:H3H4}
we find that
\begin{equation}\label{eqZZ1}
   aH_{4,n}^{-1}+bH_{3,n}^{-1}=(b-a)H_{4,n}^{-1}\widetilde H_{1,n}H_{3,n}^{-1}
\end{equation}
and
\begin{multline*}
   a(z-b)R_n^{*{-1}}(a)H_{4,n}^{-1}+ b(z-a)R_n^{* {-1}}(b)H_{3,n}^{-1}
   \\
   \begin{aligned}
      &=
      (b-a)ab\big[ T_n^*H_{4,n}^{-1}\widetilde H_{1,n}-H_{4,n}^{-1}H_{1,n} \big]
      H_{3,n}^{-1}
      \\
      &\phantom{=\ \ }+ z (b-a)\big[ H_{4,n}^{-1}\widetilde H_{1,n}+ T_n^* H_{4,n}^{-1}(abH_{1,n}-(a+b)\widetilde H_{1,n}) \big] H_{3,n}^{-1}.
   \end{aligned}
\end{multline*}

Therefore the entries in the matrix $V^{(2n+1)}$ can be expanded in powers of $z$ as follows:
\begin{align*}
   \widehat \alpha^{2n+1}(z)
   &= I + v_n^* \sum_{j=1}^{n-1} z^{j} T_n^{*(j-1)} \widetilde H_{1,n}^{-1} u_n,
   \\
   \widehat \gamma^{2n+1}(z) 
   & = u_n^* \sum_{j=0}^n z^{j} T_n^{*j} \widetilde H_{1,n}^{-1} u_n,
   \\
   \widehat \beta^{2n+1}(z) 
   &= 
   v_n^* \bigg\{ ab 
   + z \big[ - (a+b) + ab T_n^{*} \big]
   + \sum_{j=2}^{n+2} z^j T_n^{*(j-2)} \big[ I - (a+b)T_n^* + ab T_n^{*2} \big]  
   \bigg\}
   \\
   &\qquad\qquad \times H_{4,n}^{-1} \widetilde H_{1,n} H_{3,n}^{-1} v_n,
   \\
   \widehat \delta^{2n+1}(z) 
   &=
   u_n^* \bigg\{
   ab
   + z \big[ (a+b) + ab T_n^{*} \big]
   + \sum_{j=1}^{n+1}
   z^j T_n^{*(j-1)} \big[ I - (a+b) T_n^{*} + ab T_n^{*2} 
   \big]
   \bigg\}
   \\
   \widehat \delta^{2n+1}(z) 
   &= I + ab u_n^* 
   \big[ T_n^{*}H_{4,n}^{-1}\widetilde H_{1,n} - H_{4,n}^{-1} H_{1,n} \big] H_{3,n}^{-1}v_n
   \\ & \phantom{=\ }
   + u_n^* \bigg\{
   \sum_{j=1}^{n+1}
   z^j T_n^{*(j-1)} \big[ I - (a+b) T_n^{*} + ab T_n^{*2} 
   \big]
   \bigg\}
   H_{4,n}^{-1} \widetilde H_{1,n} H_{3,n}^{-1} v_n
\end{align*}
and the coefficients of the expansion in \eqref{v2nm1A} are for $2\le j \le n-1$

\begin{align*}
   \widetilde A_0 &= \begin{pmatrix}
      I&ab v_n^* H_{4,n}^{-1}\widetilde H_{1,n}H_{3,n}^{-1}v_n\\
      u_n^*\widetilde H_{1,n}^{-1}u_n&I
      +abu_n^*(T_n^*H_{4,n}^{-1}\widetilde H_{1,n}-H_{4,n}^{-1}H_{1,n})
      H_{3,n}^{-1}v_n
   \end{pmatrix},
   \\[2ex]
   \widetilde A_1 &=
   \begin{pmatrix}
      v_n^*\widetilde H_{1,n}^{-1}u_n &
      v_n^* \big[ abT_n^*-(a+b)I \big] H_{4,n}^{-1}\widetilde H_{1,n}H_{3,n}^{-1}v_n
      \\
      u_n^*T_n^*\widetilde H_{1,n}^{-1}u_n&
      u_n^* \big[ I - (a+b) T_n^* + ab (T_n^*)^2
      \big] H_{4,n}^{-1} \widetilde H_{1,n} H_{3,n}^{-1} v_n
   \end{pmatrix},
   \\[2ex]
   \widetilde A_j &=
   \begin{pmatrix}
      v_n^*T_n^{*(j-1)} \widetilde H_{1,n}^{-1} u_n
      &
      v_n^* T_n^{*(j-2)} \big[ I - (a+b)T_n^* + ab T_n^{*2} \big]  
      H_{4,n}^{-1} \widetilde H_{1,n} H_{3,n}^{-1} v_n
      \\
      u_n^* T_n^{*j} \widetilde H_{1,n}^{-1} u_n
      &
      u_n^* 
      T_n^{*(j-1)} \big[ I - (a+b) T_n^{*} + ab T_n^{*2} \big]
      H_{4,n}^{-1} \widetilde H_{1,n} H_{3,n}^{-1} v_n
   \end{pmatrix},
   \\
   &=
   \begin{pmatrix}
      v_n^* T_n^{*(j-2)} & 0 \\ 0 & u_n^* T_n^{*(j-1)}
   \end{pmatrix}
   \begin{pmatrix}
      I & I - (a+b)T_n^* + ab T_n^{*2}
      \\
      I & I - (a+b) T_n^{*} + ab T_n^{*2}
   \end{pmatrix}
   \begin{pmatrix}
      T_n^* \widetilde H_{1,n}^{-1} u_n & 0 \\
      0 & H_{4,n}^{-1} \widetilde H_{1,n} H_{3,n}^{-1} v_n
   \end{pmatrix}
   \\[2ex]
   \widetilde A_{n+1}
   &= \begin{pmatrix}
      v_n^*T_n^{*n}\widetilde H_{1,n}^{-1}u_n &  v_n^*T_n^{*(n-1)} \big[ I-(a+b)T_n^* \big] H_{4,n}^{-1}\widetilde H_{1,n}H_{3,n}^{-1}v_n\\
      0&
      u_n^* T_n^{*n} H_{4,n}^{-1}\widetilde H_{1,n}H_{3,n}^{-1}v_n
   \end{pmatrix},
   \\[2ex]
   \widetilde A_{n+2}
   &= \begin{pmatrix}
      0&  v_n^*T_n^{*n}H_{4,n}^{-1}\widetilde H_{1,n}H_{3,n}^{-1}v_n\\
      0&0
   \end{pmatrix}.
\end{align*}

\end{remark}

\begin{remark} \label{remsep2}
Let the matrices $C$, $D$ be as in \eqref{eq:CD}
and let $\widetilde V_r^{(n)}$ be as in \eqref{RM2nm1A} for $r=1$.
Then the matrices $\widetilde B_0$, $\widetilde B_1$, $\widetilde B_{n}$ and $\widetilde B_{n+1}$
in the representation \eqref{v2nAA} are
\begin{align*}
   \widetilde B_0 &= CD,
   \\
   \widetilde B_j &=
   \begin{pmatrix}
      v_n^* T_n^{*(j-1)} & 0 \\
      0 & u_{1,n}^* T_n^{*(j-1)}
   \end{pmatrix}
   \begin{pmatrix}
      I & -I \\
      I & -I
   \end{pmatrix}
   \begin{pmatrix}
      H_{1,n}^{-1} u_{1,n} & 0 \\
      0 & H_{1,n}^{-1} v_{n}
   \end{pmatrix}
   CD,
   \qquad 1\le j \le n+1
\end{align*}
because
\begin{align*}
   \widetilde \alpha_r^{(n)}(z) 
   &= I+z v_n^*R_n^*(\bar z)H_{r,n}^{-1}u_{r,n}
   = I + v_n^* \sum_{j=1}^{n+1} z^j T_n^{*(j-1)} H_{r,n}^{-1}u_{r,n},
   \\
   \widetilde\beta_r^{(n)}(z)
   &= -z v_n^*R_n^*(\bar z)H_{r,n}^{-1}v_n
   = - v_n^* \sum_{j=1}^{n+1} z^j T_n^{*(j-1)} H_{r,n}^{-1}v_{n},
   \\
   \widetilde\gamma_r^{(n)}(z)
   &= z u_{r,n}^*R_n^*(\bar z)H_{r,n}^{-1}u_{r,n}
   = u_{r,n}^* \sum_{j=1}^{n+1} z^j T_n^{*(j-1)} H_{r,n}^{-1}u_{r,n},
   \\
   \widetilde\delta_r^{(n)}(z)
   &= I-z u_{r,n}^*R_n^*(\bar z)H_{r,n}^{-1}v_n
   = I - u_{r,n}^* \sum_{j=1}^{n+1} z^j T_n^{*(j-1)} H_{r,n}^{-1}v_{n}.
\end{align*}
\end{remark}

\subsection{Expansion in $z=a$} 

\begin{remark}\label{remsep3}
   The resolvent matrix $U^{(2n+1)}$ from \eqref{RM2nm1} can be written
   as in \eqref{v2nm1C} where the coefficient matrices are for $2\le j\le n$ are
   \begin{align*}
      \widetilde C_0
      &= \begin{pmatrix}
	 I&
	 -Q_{1,n+1}^*(a)P_{1,n+1}^{*-1}(a)\\
	 0&I
      \end{pmatrix},
      \\[2ex]
      \widetilde C_1
      &= \begin{pmatrix}
	 Q_{2,n}^{*\prime}(a)Q_{2,n}^{*-1}(a) &
	 -Q_{1,n+1}^{*\prime}(a)P_{1,n+1}^{*-1}(a)
	 \\
	 -(b-a)P_{2,n}^*(a)Q_{2,n}^{*-1}(a)&
	 P_{1,n+1}^{*\prime}(a)P_{1,n+1}^{*-1}(a)
      \end{pmatrix},
      \\[2ex]
      \widetilde C_j
      &= \frac{1}{j!} 
      \begin{pmatrix}
	 Q_{2,n}^{*[j]}(a)Q_{2,n}^{*-1}(a) 
	 & -Q_{1,n+1}^{*[j]}(a)P_{1,n+1}^{*-1}(a)
	 \\
	 j\big[ (j-1)P_{2,n}^{*[j-2]}(a) -  (b-a) P_{2,n}^{*[j-1]}(a) \big] Q_{2,n}^{*-1}(a)
	 & P_{1,n+1}^{*[j]}(a)P_{1,n+1}^{*-1}(a)
      \end{pmatrix}
      \\
      &= \frac{1}{j!} 
      \begin{pmatrix}
	 Q_{2,n}^{*[j]}(a)
	 & -Q_{1,n+1}^{*[j]}(a)
	 \\
	 j\big[ (j-1)P_{2,n}^{*[j-2]}(a) -  (b-a) P_{2,n}^{*[j-1]}(a) \big] 
	 & P_{1,n+1}^{*[j]}(a)
      \end{pmatrix}
      \begin{pmatrix}
	 Q_{2,n}^{*-1}(a) & 0
	 \\
	 0 & P_{1,n+1}^{*-1}(a)
      \end{pmatrix},
      \\[2ex]
      \widetilde C_{n+1}
      &= 
      \frac{1}{(n+1)!} 
      \begin{pmatrix}
	 0& -Q_{1,n+1}^{*[n+1]}(a)P_{1,n+1}^{*-1}(a) \\
	 (n+1) \big[ n P_{2,n}^{*[n-1]}(a) - (b-a) P_{2,n}^{*[n]}(a) \big]
	 Q_{2,n}^{*-1}(a)&
	 P_{1,n+1}^{*[n+1]}(a)P_{1,n+1}^{*-1}(a)
      \end{pmatrix},
      \\[2ex]
      \widetilde C_{n+2}
      &= \frac{1}{n!}
      \begin{pmatrix}
	 0&  0\\
	 P_{2,n}^{*[n]}(a)Q_{2,n}^{*-1}(a)&0
      \end{pmatrix}.
   \end{align*}
   Here $Q_{2,n}^{*\prime}(a)$ (resp. $Q_{2,n}^{*[j]}(a)$) denotes the first derivative (resp. $j$-th derivative) with respect to $z$.
\end{remark}

\begin{remark}\label{remsep4}
   The resolvent matrix $U^{(2n)}$ from \eqref{RM1} can be written
   as in \eqref{v2nD} where the coefficient matrices are for $1\le j\le n$ are
   \begin{align*}
      \widetilde D_0
      &=\begin{pmatrix}
	 I& \frac{1}{b-a}Q_{3,n}^*(a)P_{3,n}^{*-1}(a)\\
	 0&I
      \end{pmatrix},
      \\[2ex]
      \widetilde D_{j}
      &= \frac{1}{j!} 
      \begin{pmatrix}
	 Q_{4,n}^{*[j]}(a)Q_{4,n}^{*-1}(a)
	 & \frac{1}{(b-a)}Q_{3,n}^{*[j]}(a)P_{3,n}^{*-1}(a) 
	 \\
	 j P_{4,n}^{*[j-1]}(a)Q_{4,n}^{*-1}(a)
	 & \big[ P_{3,n}^{*[j]}(a)-\frac{j}{b-a}P_{3,n}^{*[j-1]}(a) \big] P_{3,n}^{*-1}(a)
      \end{pmatrix}
      \\
      &= \frac{1}{j!} 
      \begin{pmatrix}
	 Q_{4,n}^{*[j]}(a)
	 & Q_{3,n}^{*[j]}(a)
	 \\
	 j P_{4,n}^{*[j-1]}(a)
	 & (b-a) P_{3,n}^{*[j]}(a)- j P_{3,n}^{*[j-1]}(a)
      \end{pmatrix}
      \begin{pmatrix}
	 Q_{4,n}^{*-1}(a) & 0
	 \\
	 0 & \frac{1}{b-a} P_{3,n}^{*-1}(a)
      \end{pmatrix},
      \\[2ex]
      \widetilde D_{n+1}
      &= \frac{1}{n!} \begin{pmatrix}
	 0&  0
	 \\
	 P_{4,n}^{*[n]}(a)Q_{4,n}^{*-1}(a)&
	 -\frac{1}{(b-a)}P_{3,n}^{*[n]}(a)P_{3,n}^{*-1}(a)
      \end{pmatrix}.
   \end{align*}
   Here $Q_{4,n}^{*\prime}(a)$ (resp. $Q_{4,n}^{*[j]}(a)$) means the first derivative (resp. $j$-th derivative) with respect to $z$.
\end{remark}

\section*{Acknowledgment}

The research of Abdon E. Choque-Rivero was supported by 
CONACYT Project A1-S-31524 and CIC-UMSNH, Mexico.

\bibliography{moments}{}
\bibliographystyle{alpha}

\end{document}